\numberwithin{equation}{section}
\author{Quentin Faes}
\title{Triviality of the $J_4$-equivalence  among \\homology 3-spheres}
{\theoremstyle {definition} \newtheorem {defi} {Définition} [section] }
\newtheorem{definition}[defi]{Definition}
\newtheorem{theorem}[defi]{Theorem}
\newtheorem{coroll}[defi]{Corollary}
\newtheorem{lemma}[defi]{Lemma}
\newtheorem{propo}[defi]{Proposition}
\theoremstyle{definition}
\newtheorem*{ack}{Acknowledgements}
\newtheorem{rem}[defi]{Remark}
\theoremstyle{plain}
\newtheorem{thlett}{Theorem}
\def\blfootnote{\xdef\@thefnmark{}\@footnotetext}
\newcommand{\address}{{
  \bigskip
  \footnotesize
  \textsc{Institut de mathématiques de Bourgogne, UMR 5584, Université Bourgogne Franche-Comté, 21000 Dijon, France}\par\nopagebreak
  \textit{E-mail address}  \texttt{quentin.faes@u-bourgogne.fr}}}
\definecolor{wqwqwq}{rgb}{0,0,0}
\newcommand{\ltree}[4]{\tikz[baseline=8pt, scale = 0.5]{
\draw [color=wqwqwq] (0,2-1.25)-- (2,2-1.25);
\draw [color=wqwqwq] (0,1.5)-- (0,0);
\draw [color=wqwqwq] (2,1.5)-- (2,0);

\draw[color=wqwqwq] (0,3-1.25) node {#1};
\draw[color=wqwqwq] (0,1.1-1.25) node {#2};
\draw[color=wqwqwq] (2,1.1-1.25) node {#3};
\draw[color=wqwqwq] (2,3-1.25) node {#4};}}
\newcommand{\lfivetree}[5]{\tikz[baseline=8pt ,scale = 0.5]{
\draw [color=wqwqwq] (0,1.25)-- (0,0.25);
\draw [color=wqwqwq] (0,0.75)-- (2,0.75);
\draw [color=wqwqwq] (2,1.25)-- (2,0.25);
\draw [color=wqwqwq] (1,0.75)-- (1,1.5);
\draw[color=wqwqwq] (0,0) node {#1};
\draw[color=wqwqwq] (0,1.6) node {#2};
\draw[color=wqwqwq] (1,1.75) node {#3};
\draw[color=wqwqwq] (2,1.6) node {#4};
\draw[color=wqwqwq] (2,0) node {#5};}
}
\newcommand{\bfivetree}[5]{\tikz[baseline=17pt ,scale = 0.9]{
\draw [color=wqwqwq] (0,1.25)-- (0,0.25);
\draw [color=wqwqwq] (0,0.75)-- (2,0.75);
\draw [color=wqwqwq] (2,1.25)-- (2,0.25);
\draw [color=wqwqwq] (1,0.75)-- (1,1.7);
\draw[color=wqwqwq] (0,0) node[scale = 0.8]{#1};
\draw[color=wqwqwq] (0,1.6) node[scale = 0.8] {#2};
\draw[color=wqwqwq] (1,2) node[scale = 0.8] {#3};
\draw[color=wqwqwq] (2,1.6) node[scale = 0.8] {#4};
\draw[color=wqwqwq] (2,0) node[scale = 0.8] {#5};}
}
\newcommand{\odottree}[2]{#1 \odot #2}
\newcommand{\ltritree}[3]{\tikz[baseline=8pt ,scale = 0.5]{
\draw [color=wqwqwq] (1,1.5)-- (1,0.75);
\draw [color=wqwqwq] (1,0.75)-- (1-0.866*0.75,0.75-.37);
\draw [color=wqwqwq] (1,0.75)-- (1+0.866*0.75,0.75-.37);

\draw[color=wqwqwq] (1,1.5+0.2) node {#1};
\draw[color=wqwqwq] (1-0.866*0.75-0.2,0.75-.37-0.2) node {#2};
\draw[color=wqwqwq] (1+0.866*0.75+0.2,0.75-.37-0.2) node {#3};}}
\definecolor{codegreen}{rgb}{0,0.6,0}
\definecolor{codegray}{rgb}{0.5,0.5,0.5}
\definecolor{codepurple}{rgb}{0.58,0,0.82}
\definecolor{backcolour}{rgb}{0.95,0.95,0.92}
\lstdefinestyle{mystyle}{
    backgroundcolor=\color{backcolour},   
    commentstyle=\color{codegreen},
    keywordstyle=\color{magenta},
    numberstyle=\tiny\color{codegray},
    stringstyle=\color{codepurple},
    basicstyle=\ttfamily\footnotesize,
    breakatwhitespace=false,         
    breaklines=true,                 
    captionpos=b,                    
    keepspaces=true,                 
    numbers=left,                    
    numbersep=5pt,                  
    showspaces=false,                
    showstringspaces=false,
    showtabs=false,                  
    tabsize=2
}
\newcommand{\bracket}[2]{\Bigg[#1;#2\Bigg]}
\newcommand{\Q}{\mathbb{Q}}
\newcommand{\Z}{\mathbb{Z}}
\newcommand{\K}{\mathcal{K}}
\newcommand{\kp}{\mathcal{K}'}
\newcommand{\kpp}{\mathcal{K}''}
\newcommand{\I}{\mathcal{I}}
\newcommand{\M}{\mathcal{M}}
\newcommand{\sph}{\operatorname{Sp}(H)}
\newcommand{\ses}[3]{\[ 0 \longrightarrow #1 \longrightarrow #2 \longrightarrow #3 \longrightarrow 0\]}
\begin{document}
\maketitle
\titlelabel{\thetitle.  }
\begin{adjustwidth}{20 pt}{20pt}
\textsc{Abstract.}  We prove that all homology 3-spheres are $J_4$-equivalent, i.e. that any homology 3-sphere can be obtained from one another by twisting one of its Heegaard splittings by an element of the mapping class group acting trivially on the fourth nilpotent quotient of the fundamental group of the gluing surface. We do so by exhibiting an element of $J_4$, the fourth term of the Johnson filtration of the mapping class group, on which (the core of) the Casson invariant takes the value $1$. In particular, this provides an explicit example of an element of $J_4$ that is not a commutator of length $2$ in the Torelli group.
\end{adjustwidth}
\setcounter{tocdepth}{1}
\renewcommand{\contentsname}{\hfill Contents\hfill}
\renewcommand{\cfttoctitlefont}{\scshape}
\renewcommand{\cftaftertoctitle}{\hfill}
\renewcommand{\cftsecaftersnum}{.}
\setlength{\cftbeforesecskip}{0.05cm}
\tableofcontents
\section{Introduction}
The study of the mapping class groups of surfaces is related to the study of 3-manifolds through the notion of \emph{Heegaard splitting}, a way of presenting a 3-manifold by specifying a gluing map between the boundaries of two handlebodies. Also, one can take such a presentation and compose the gluing map by an element of the mapping class group, yielding another 3-manifold. This procedure of ``twisting'' Heegaard splittings is called a ``surgery''. When we restrict the surgeries to certain subgroups of the mapping class group, it provides us with some equivalence relations among the set of homeomorphism classes of 3-manifolds. This can help to understand and study the topological properties of 3-manifolds invariants, in particular the ones of the family of so-called ``finite-type invariants'' (see, e.g., \cite{mor} for the case of the Casson invariant). For example, one can consider restricting surgeries to the \emph{lower central series} or the \emph{Johnson filtration} of the \emph{Torelli group}. Let $\Sigma := \Sigma_{g,1}$ be a compact connected oriented surface of genus $g$, with one boundary component, and $\mathcal{M}= \mathcal{M}(\Sigma)$ the mapping class group of $\Sigma$ relative to the boundary. The Torelli group $\I = \I(\Sigma)$ is the subgroup of $\M$ consisting of elements acting trivially on the first homology group of the surface. Let $k$ be a positive integer. The $(k+1)$-th term of the lower central series of the Torelli group is defined inductively as the commutator subgroup $\Gamma_{k+1}\I := [\Gamma_k \I , \I]$, with $\Gamma_1\I :=  \I$. The $k$-th term $J_k= J_k(\Sigma)$ of the Johnson filtration consists of elements acting trivially on the $k$-th nilpotent quotient of the fundamental group of $\Sigma$.
 \blfootnote{This research has been supported by the project “AlMaRe” (ANR-19-CE40-0001-01) of the ANR and the project ``ITIQ-3D" of the Région Bourgogne Franche-Comté.

The IMB receives support from the EIPHI Graduate School (contract ANR-17-EURE-0002).}

Let us denote by $\mathcal{S}(3)$ the set of homeomorphism classes of homology 3-spheres (to which we restrict ourselves in this paper). For any embedding $j$ of the surface $\Sigma$ in a homology 3-sphere $M$, and for any element $\varphi$ of the Torelli group of the surface, one can define a new homology 3-sphere $M_{j,\varphi}$ by removing from $M$ an open neighborhood $\nu \Sigma$ of $\Sigma$, and gluing the mapping cylinder of $\varphi$ in place of $\nu \Sigma$. It is known that any homology 3-sphere can be obtained from $S^3$ in this way (see, for example, \cite{mat87}). Two homology 3-spheres $M$ and $M'$ are said to be \emph{$J_k$-equivalent} (resp \emph{$Y_k$-equivalent}) if $M'$ is homeomorphic to $M_{j, \phi}$ for some embedding $j : \Sigma \rightarrow M$ of a surface $\Sigma$ and some $\phi \in J_k(\Sigma)$ (resp. in $\Gamma_k \I(\Sigma)$). These are known to be equivalence relations, and one can easily see that $Y_k$-equivalence implies $J_k$-equivalence. Moreover, in the above definitions, we can always restrict ourselves to the case where the boundary of $M \setminus \nu\Sigma$ is a Heegaard surface, thus limiting ourselves to the above-mentioned notion of ``surgery'' (see \cite[Lemma 2.1]{masY3}, for instance).  Morita \cite{mor} and Pitsch \cite{pit}, successively, have shown that $J_2$-equivalence and $J_3$-equivalence are trivial on $\mathcal{S}(3)$. In constrast, the $Y_2$-equivalence is non-trivial and is classified by Rochlin's invariant $\mu: \mathcal{S}(3) \rightarrow \Z / 2\Z$ (see \cite{habclasper} and\cite{masY2cl}). The goal of this paper is to show that $J_4$-equivalence is also trivial on $\mathcal{S}(3)$.

Let us now be more precise. We recall the definition of the Johnson filtration and the \emph{Johnson homomorphisms}, which have been introduced and studied by Johnson and Morita in \cite{johsurvey, mor93}. Recall that $\pi := \pi_1(\Sigma)$ is a free group. We denote by $\zeta$ the element of $\pi$ corresponding to the oriented boundary $\partial \Sigma$. For $k \geq 1$, we consider the lower central series of the group $\pi$, the filtration $(\Gamma_k \pi)_{k \geq 1}$. We call the quotient $N_k := \pi / \Gamma_{k+1} \pi$ the \emph{$k$-th nilpotent quotient} of $\pi$. The first nilpotent quotient is canonically isomorphic to $H := H_1(\Sigma, \Z)$. The intersection form of the surface induces a symplectic form $\omega$ on the abelian group $H$. The action of $\M$ on the surface induces the natural $\sph$-module structure on $H$, as any transformation of the surface preserves the intersection form. The curves $(\alpha_i)_{1 \leq i \leq g}$ and $(\beta_i)_{1 \leq i \leq g}$ on Figure \ref{surface} are two cut systems of $\Sigma$ such that each curve in the first one has exactly one intersection point with exactly one curve in the second one, and vice versa. Such a choice is called a system of \emph{meridians} and \emph{parallels}. In particular, it fixes a choice of a symplectic basis for $H = \mathbb{Z}\langle a_1,a_2 \dots a_g, b_1, b_2, \dots b_g\rangle$, where $a_i$ (resp. $b_i$) is the homology class of $\alpha_i$ (resp. $\beta_i$).

\begin{figure}[h]
	\centering
	\includegraphics[scale= 0.23]{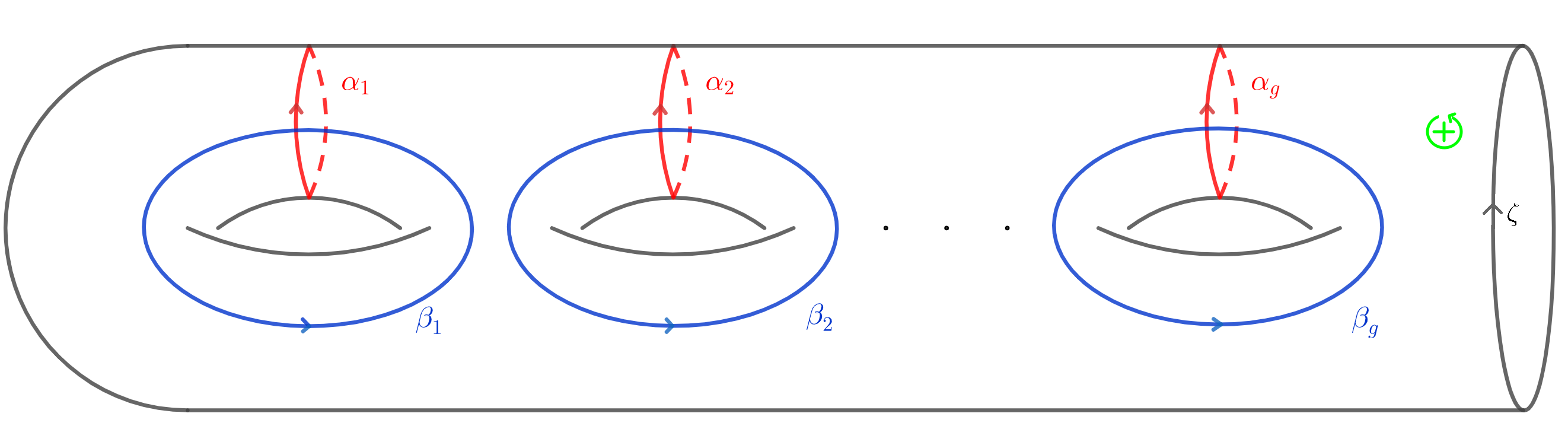}
	\caption{Model for $\Sigma_{g,1}$, and a possible choice of system of meridians and parallels}
	\label{surface}
\end{figure}

We will also need at some point to lift these curves to a basis of $\pi= \pi_1(\Sigma, x_0)$, with $x_0 \in \partial \Sigma$. We do so as described in Figure \ref{basispi}.

\begin{figure}[h]
	\centering
	\includegraphics[scale= 0.21]{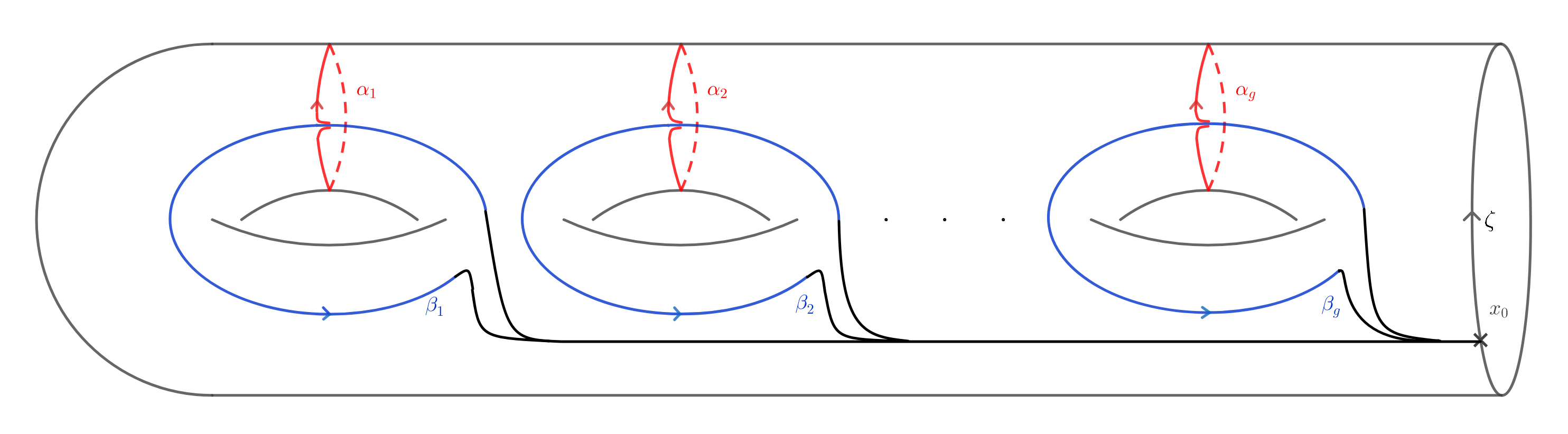}
	\caption{Based curves inducing a basis of $\pi$}
	\label{basispi}
\end{figure}
It is clear that $\mathcal{M}$ acts both on $\pi$ and all its nilpotent quotients. There is an exact sequence: \ses{\mathcal{L}_{k+1}(H)}{N_{k+1}}{N_{k}} where $\mathcal{L}(H)$ stands for the graded free Lie algebra generated by $H$ in degree $1$, and the first non-trivial arrow is given by the identification between $\mathcal{L}_{k+1}(H)$ and $\Gamma_{k+1} \pi / \Gamma_{k+2} \pi$. This sequence induces the short exact sequence:
\[ 0 \longrightarrow \operatorname{Hom}(H,\mathcal{L}_{k+1}(H)) \longrightarrow \operatorname{Aut}(N_{k+1}) \longrightarrow \operatorname{Aut}(N_{k}). \]
 \noindent The group $J_k$ is defined as the kernel of the canonical homomorphism $\rho_k : \mathcal{M} \rightarrow \operatorname{Aut}(N_k)$. In particular, by the Hurewicz theorem, $J_1$ is the Torelli group, otherwise denoted $\mathcal{I} = \I(\Sigma)$. The alternative notation $\mathcal{K} = \K(\Sigma)$ is also sometimes used for $J_2$. The restriction of $\rho_{k+1}$ to $J_k$ then induces a morphism: \[ \tau_k : J_k \longrightarrow \operatorname{Hom}(H,\mathcal{L}_{k+1}(H)). \]
This map is called the \textit{$k$-th Johnson homomorphism}, and its kernel is $J_{k+1}$. Furthermore, the mapping class group acts on itself by conjugation, inducing an action of the symplectic group $\operatorname{Sp}(H)$ on the quotient $J_k/ J_{k+1}$. Each $\tau_k$ is then $\operatorname{Sp}(H)$-equivariant. It is also known that the graded space associated to the Johnson filtration has a Lie structure, its bracket being induced by the commutator in $\mathcal{M}$. The target space of $\tau_k$ can be identified with the space of \textit{ derivations of degree $k$}, i.e. derivations of the Lie algebra $\mathcal{L}(H)$ mapping $H = \mathcal{L}_1(H)$ to $\mathcal{L}_{k+1}(H)$. We denote by $D_k(H)$ the subspace of \textit{symplectic derivations} of degree $k$, which consists of derivations of $\mathcal{L}(H)$ of degree $k$ sending $\tilde{\omega} \in \Lambda^2 H \simeq  \mathcal{L}_2(H)$, the bivector dual to $\omega$, to 0. The fact that an element of $\mathcal{M}$ fixes the boundary of $\Sigma$ allows to further restrict the target of $\tau_k$ to $D_k(H)$. Also, $D_k(H)$ can be defined by the short exact sequence: \ses{D_k(H)}{H \otimes \mathcal{L}_{k+1}(H)}{\mathcal{L}_{k+2}(H)} \noindent where the arrow from ${H \otimes \mathcal{L}_{k+1}(H)}$ to ${\mathcal{L}_{k+2}(H)}$ is the bracket of the free Lie algebra.

With these definitions, the spaces $(D_k(H))_{k \geq 1}$ reassembles in a graded Lie algebra $D(H)$ (the bracket of two derivations $d_1$ and $d_2$ being classically defined as $ d_1 d_2 -d_2 d_1$). The family $(\tau_k)_{k \geq 1}$ induces a map:
\[ \tau : \underset{k\geq 1}{\bigoplus} J_k / J_{k+1} \longrightarrow D(H)\]
\noindent which is an $\operatorname{Sp}(H)$-equivariant graded Lie morphism.

The Casson invariant $\lambda: \mathcal{S}(3) \rightarrow \Z$ is an invariant of homology 3-spheres, lifting the Rochlin invariant $\mu : \mathcal{S}(3) \rightarrow \mathbb{Z}_2$, and originally defined by counting in some way the number of irreducible representations of the fundamental group of the homology 3-sphere into $SU(2)$. In \cite{mor}, Morita explained that the map $\lambda_j$ induced on $\I$ by the Casson invariant and any Heegaard embedding $j : \Sigma \rightarrow S^3$: \begin{align*}
\lambda_j : \mathcal{I} &\longrightarrow \mathbb{Z} \\
\varphi & \longmapsto \lambda(S^3_{j,\varphi})
\end{align*}
is \emph{not} a homomorphism (here, by a Heegaard embedding, we mean that capping the surface $j(\Sigma)$ by a disk yields a Heegaard splitting of $S^3$). Nevertheless, Morita showed that its restriction to $\mathcal{K} :=J_2$ is a homomorphism. He also showed that this restriction can be expressed as the sum of two homomorphisms: 

\begin{equation} -\lambda_{j}=\frac{1}{24} d+q_{j}: \mathcal{K} \rightarrow \mathbb{Z}.\label{eqmorita}\end{equation}

\noindent The map $d$, named \emph{the core of the Casson invariant} is independent of $j$, and the map $q_j$  factorizes through the second Johnson homomorphism. Thus, the Casson invariant induces a well-defined homomorphism $\lambda$ on $J_k$ for any $k \geq 3$, meaning that the value of the Casson invariant on $S^3_{j,\varphi}$ is independent of $j$ when $k \geq3$. The map $d$ is rather difficult to understand, but it is known that Dehn twists along bounding simple closed curves (abbreviated BSCC in the rest of this paper) of genus 1 and 2 generate $\K$ and that the value of $d$ on a Dehn twist along a BSCC of genus $h$ is $4h(h-1)$. Recall that the genus of a BSCC is defined as the genus of the subsurface bounding the curve which does not contain $\partial \Sigma$. We will denote $\kp$ (resp. $\kpp$) the subgroup of $\K$ generated by twists around BSCC of genus 1 (resp. genus~2).

It was claimed by Morita, also in \cite{mor}, and written explicitly by Massuyeau and Meilhan in \cite{masY3}, that $\lambda(J_3)= \mathbb{Z}$ in genus $g=2$. Moreover, according to Habiro \cite{habclasper}, $Y_3$-equivalence among homology 3-spheres is classified by $\lambda$. Massuyeau and Meilhan \cite[Rem. 6.4]{masY3} then explained how to reprove from these two facts Pitsch's result stating that any two homology 3-spheres are always $J_3$-equivalent. Using the same strategy, we shall prove the following.

\begin{thlett}
For any genus $g \geq 2$, the restriction of $\lambda : \K \rightarrow \mathbb{Z}$ to $J_4$ is surjective.
\label{thA}
\end{thlett}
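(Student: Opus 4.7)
The plan is to exhibit, for each $g \ge 2$, an explicit element $\varphi \in J_4$ with $|\lambda(\varphi)| = 1$. Since $\lambda|_{\K}$ is a homomorphism and $J_4 \subset \K$, producing one such $\varphi$ is enough to conclude. The starting point is the identity \eqref{eqmorita}: on $J_3 = \ker \tau_2$, the term $q_j$ vanishes, so that $-24\,\lambda = d$ on $J_4 \subset J_3$. The task thus reduces to constructing $\varphi \in J_4$ with $d(\varphi) = \pm 24$, where $d: \K \to \Z$ is a homomorphism taking value $4h(h-1)$ on a BSCC twist of genus $h$. In particular $d$ vanishes on $\kp$, while $d(\kpp) \subset 8\Z$, so natural targets are a signed product of three genus-$2$ BSCC twists (once $g \ge 3$), or a single genus-$3$ BSCC twist placed inside $J_4$.

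The technical heart is arranging membership in $J_4$. I would write the candidate as a word $\varphi = \prod T_{\gamma_i}^{\epsilon_i}$ and kill the successive Johnson obstructions. Automatically $\tau_1(\varphi) = 0$, because every BSCC twist lies in $\K$. The condition $\tau_2(\varphi) = 0$ reduces, via Johnson's explicit formula for $\tau_2$ on BSCC twists, to a linear cancellation in $D_2(H)$ among the classes $\tau_2(T_{\gamma_i})$; this can be engineered by arranging the $\gamma_i$ in sufficiently symmetric position, e.g.\ in pairs bounding the same genus on opposite sides of a common subsurface. The crucial step is then to force $\tau_3(\varphi) = 0$. Since $\tau_3$ is only a crossed homomorphism on $\K$, the evaluation on a product is not simply additive: one must use Morita's cocycle formula (or equivalently a Magnus-expansion computation) and carefully track the cross-terms coming from the reordering of the factors $T_{\gamma_i}^{\epsilon_i}$. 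Once $\varphi \in J_4$ has been produced, additivity of $d$ yields $d(\varphi) = \sum_i \epsilon_i \, 4 h_i (h_i-1)$, which the construction arranges to equal $\pm 24$, giving $\lambda(\varphi) = \mp 1$.

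The main obstacle is the explicit, cancellation-heavy computation of $\tau_3$ on the chosen word. As the abstract emphasizes, no single commutator $[\alpha,\beta]$ with $\alpha,\beta \in \I$ will serve: such an element lies in $\Gamma_2 \I$, but $d$ vanishes on any commutator (being a homomorphism), and moreover $\tau_3$ on a generic length-$2$ commutator is nonzero. The proof must therefore deploy a genuine product of several commutators—or, equivalently, a carefully chosen signed product of BSCC twists of mixed genus—so that the $\tau_2$- and $\tau_3$-contributions cancel while the $d$-total remains $\pm 24$. I expect the construction to be carried out in a fixed small genus (most likely $g=3$, since that is the minimal genus in which a BSCC can have $d$-value $24$, together with a separate check in $g=2$ via $\kpp$-combinations), and then propagated to all larger $g$ through the canonical inclusion $\Sigma_{g,1} \hookrightarrow \Sigma_{g+1,1}$, which is compatible with both the Johnson filtration and the Casson invariant.
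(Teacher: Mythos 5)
Your high-level reduction is the same as the paper's: use \eqref{eqmorita} to turn the problem into finding $\varphi \in J_4$ with $d(\varphi)=\pm 24$, built from BSCC twists whose successive Johnson obstructions cancel. But the two steps you leave as ``engineering'' are precisely where the content of the theorem lives, and your proposed mechanisms for them either point in an unworkable direction or are absent. For the $\tau_2$-cancellation: the quotient $\tau_2(\K)/\tau_2(\kp)$ is isomorphic to $\Lambda^4(H/3H)$ (Proposition \ref{tau2kp}), so the class of a \emph{single} genus-$2$ twist can never be cancelled by genus-$1$ twists, no matter how symmetrically the curves are positioned; only three times that class can. Combined with the fact that $d'$ factors through $\tau_2$ (so $d'$ must vanish on $J_3$) and the $d,d'$ bookkeeping of Lemma \ref{lemddp}, this \emph{forces} any $\psi\in J_3$ with $\lambda(\psi)=1$ to involve algebraically $-3$ genus-$2$ and $10$ genus-$1$ twists (Proposition \ref{complexity}); your alternative of a genus-$3$ twist runs into the same $d'$ constraint ($d'=21\neq 0$ for such a twist) and does not simplify anything. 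The paper realizes the required cancellation through the explicit identity \eqref{calculs1}.

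The more serious gap is at $\tau_3$. The paper does \emph{not} try to make $\tau_3$ vanish on the original word of twists by clever positioning (and your worry about cross-terms is misplaced: $\tau_2^\theta\oplus\tau_3^\theta$ is an honest homomorphism on $\K$, so $\tau_3^\theta$ is additive over the product of BSCC twists). Instead it first produces $\psi\in J_3$ with $\lambda(\psi)=1$, computes the specific nonzero element $\tau_3(\psi)\in D_3(H)$, and then proves the key lemma that $\tau_3(\psi)\in\tau_3([\K,\I])$ --- by decomposing it as a sum of brackets \eqref{eqbrack} whose degree-$2$ constituents are shown to lie in $\tau_2(\K)$ (Lemma \ref{lemtrees}) and whose degree-$1$ constituents are realizable by Johnson's surjectivity of $\tau_1$. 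The correction $\psi_2\in[\K,\I]$ then satisfies $d(\psi_2)=0$ because $d$ is invariant under $\M$-conjugation, so $\varphi=\psi\psi_2\in J_4$ still has $d(\varphi)=-24$. Your plan contains no analogue of this step: you give no reason the $\tau_3$-obstruction of your word should vanish or be removable, and no mechanism guaranteeing that whatever correction is applied leaves $d$ untouched. (Minor point: your parenthetical that $d$ vanishes on any commutator is not immediate for $[\alpha,\beta]$ with $\alpha,\beta\in\I\setminus\K$; the paper needs the Birman--Craggs homomorphism, not $d$, to rule out $\varphi\in[\I,\I]$.)
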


\begin{thlett}
The $J_4$-equivalence is trivial on $\mathcal{S}(3)$. In other words, every homology 3-sphere can be obtained from $S^3$ by twisting one of its Heegaard splitting by an element of the fourth term of its Johnson filtration.
\label{thB}
\end{thlett}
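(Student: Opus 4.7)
The plan is to follow the strategy that Massuyeau and Meilhan indicate in \cite[Rem.~6.4]{masY3} to recover Pitsch's $J_3$-triviality theorem, feeding it with Theorem \ref{thA} in place of Morita's claim that $\lambda(J_3)=\mathbb{Z}$ in genus $2$. The other main ingredient is Habiro's classification \cite{habclasper}: two elements of $\mathcal{S}(3)$ are $Y_3$-equivalent if and only if they share the same Casson invariant.

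Fix $M\in\mathcal{S}(3)$ and set $n:=\lambda(M)$. Applying Theorem \ref{thA} on a Heegaard surface of some genus $g\geq 2$ (stabilizing the standard Heegaard splitting of $S^3$ if necessary), one obtains a Heegaard embedding $j:\Sigma\hookrightarrow S^3$ together with $\psi\in J_4(\Sigma)$ satisfying $\lambda(S^3_{j,\psi})=n$. The homology sphere $N:=S^3_{j,\psi}$ is then $J_4$-equivalent to $S^3$ by construction and has the same Casson invariant as $M$, so Habiro's theorem yields a $Y_3$-equivalence between $M$ and $N$.

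The decisive step is to promote this $Y_3$-equivalence into a $J_4$-equivalence, since $\Gamma_3\I$ is not contained in $J_4$: a priori, any realizer $\theta\in\Gamma_3\I$ of $M\sim_{Y_3}N$ has a nontrivial image $\tau_3(\theta)\in D_3(H)$. The question is whether, among all the $\Gamma_3\I$-elements realizing $M\sim_{Y_3}N$, one can select one with vanishing $\tau_3$-image; equivalently, whether the flexibility in Theorem \ref{thA}—namely replacing $\psi$ by $\psi\cdot\eta$ for $\eta\in\ker(\lambda_j|_{J_4})$, possibly after further stabilization—can be used to absorb the obstruction in $D_3(H)$. Once such a cancellation is achieved, transitivity gives $M\sim_{J_4}N\sim_{J_4}S^3$, yielding Theorem \ref{thB}.

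The main obstacle is precisely the promotion step. It requires a careful description of the image $\tau_3(\Gamma_3\I)\subset D_3(H)$ coming from Habiro's realizers, together with a description of the $\tau_3$-classes that can be reached via the above modifications of $\psi$. Stabilization of the Heegaard surface is likely indispensable, as it enlarges both the receiving $D_3(H)$ and the subgroup of $J_4$ available for compensation, providing the extra room needed to kill the $\tau_3$-obstruction.
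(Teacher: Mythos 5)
Your setup is the right one and follows the paper's strategy up to the point where you compare $M$ with a surgered sphere $N$ of the same Casson invariant. But the ``decisive step'' you flag as the main obstacle --- promoting a $Y_3$-equivalence to a $J_4$-equivalence by describing $\tau_3(\Gamma_3\I)\subset D_3(H)$ and absorbing the obstruction through modifications and stabilizations of $\psi$ --- is left unresolved in your write-up, and it is also the wrong place to look. The fact you are missing is that Habiro's classification gives more than the statement you quote: on $\mathcal{S}(3)$ the $Y_3$-equivalence and the $Y_4$-equivalence \emph{coincide}, because the graded quotient $Y_3(\mathcal{S}(3))/Y_4$ is a quotient of the space of connected trivalent Jacobi diagrams of degree $3$ and there are no such diagrams (this is the equivalence $(1)\Leftrightarrow(2)$ in Theorem~\ref{Y4lemma}). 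Hence $\lambda(M)=\lambda(N)$ already produces a realizer $\varphi\in\Gamma_4\I(S)$ with $M'\cong M_{S,\varphi}$, and since $\Gamma_4\I\subseteq J_4$ (the lower central series of the Torelli group sits inside the Johnson filtration term by term, which is exactly why $Y_k$-equivalence implies $J_k$-equivalence), this \emph{is} a $J_4$-equivalence. No analysis of the $\tau_3$-classes of Habiro's realizers, and no further perturbation of $\psi$, is needed: the upgrade happens at the level of $3$-manifolds, not at the level of the mapping class group.

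With that ingredient the argument closes exactly as in the paper: setting $P:=S^3_{S,\psi}$ with $\psi\in J_4(S)$ and $\lambda(\psi)=1$, the sphere $P$ is $J_4$-equivalent to $S^3$, and for $\lambda(M)\leq\lambda(M')$ one gets $M\cong M\#(S^3)^{(\lambda(M')-\lambda(M))}\stackrel{J_{4}}{\sim}M\#P^{(\lambda(M')-\lambda(M))}\stackrel{Y_{4}}{\sim}M'$, whence $M\stackrel{J_{4}}{\sim}M'$ by transitivity. As it stands, your proposal has a genuine gap at precisely the step you yourself identify as the obstacle.
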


A motivation for Theorem \ref{thA} is given by the Dehn-Nielsen Theorem, which states in particular that an element of the mapping class group $\M$ is completely determined by its action on $\pi$. Thus, given a map on $\M$, one could hope to compute it by purely algebraic methods, considering only this action. This is also a motivation when using the Johnson homomorphisms. For the case of the Casson invariant, we already know that there is no $k \geq 1$ such that $\lambda$ can be fully computed from $\rho_k : \M \rightarrow \operatorname{Aut}(N_k)$, as Hain~\cite{haininf} proved that $\lambda(J_k)~\neq~\{ 0 \}$ for $k \geq 3$. Nevertheless, it is still possible that for some $k \geq 5$, the homomorphism $\lambda$ restricted to $J_k$ is no longer surjective. This question is of course related to the determination of the graded space associated to the Johnson filtration.

In order to prove Theorem \ref{thA}, we will explicitly build an element $\varphi$ in $J_4$ whose Casson invariant is equal to $1$. Remarkably, this element also seems to be the first explicit example of an element of $J_4$ which cannot be obtained as a bracket of elements of lower terms of the Johnson filtration. The result of Hain \cite{haininf} mentioned above implies that there are elements which are not in the commutator subgroup $[\I, \K]$ arbitrarily far down in the Johnson filtration. Theorem \ref{thB} is next deduced from Theorem \ref{thA}, using the classification of the $Y_4$-equivalence by Habiro \cite{habclasper}.

The paper is organized as follows. In Section \ref{sec2}, we recall and use computational tools developed by Morita in \cite{mor} and Kawazumi and Kuno in \cite{KK}, and we build the element $\varphi$. We also discuss the complexity of the construction of such an element. In Section \ref{sec3}, we define $J_4$-equivalence and explain how the fact that $\lambda(J_4)= \mathbb{Z}$ implies the triviality of $J_4$-equivalence on $\mathcal{S}(3)$. Finally, the computer program used to claim some equalities in Section \ref{sec2} is given and explained in appendix \ref{appA}.

\begin{ack}
I would like to thank my advisor, Gwénaël Massuyeau, for guiding and supporting me throughout this work, the members of the forum ask.sagemaths.org for their answers to my questions, and the reviewer for his thoughtful comments.  \end{ack}
\label{intro}

\section{A special element of $J_4$}
\label{sec2}
In this section, we recall a method presented by Kawazumi and Kuno \cite{KK} to explicitly compute the action on $\pi$ of any element of the Torelli group and, in particular, the image of an element in $J_k$ by the $k$-th Johnson homomorphism. We then use this method to compute $\tau_3(\psi)$, where $\psi \in \M$ is a certain element of $J_3$ such that $\lambda(\psi) =1$. This element was first presented in \cite{masY3}. We then prove that $\tau_3(\psi) \in \tau_3([\K,\I])$, and use this to build an element $\varphi \in J_4$ such that $\lambda(\varphi)=1$, proving Theorem \ref{thA}.

\subsection{The Kawazumi-Kuno formula}
All the results used here are from \cite{KK}, but the reader will find enlightening additional information in \cite{MasTur}. We refer to these papers for further details.

We denote by $H_\Q := H \otimes \Q$ the rationalization of $H$ and by $\widehat{T} :=\prod_{m=0}^{\infty} H_\Q^{\otimes m}$ the completed tensor algebra generated by $H_\Q$. The algebra $\widehat{T}$ is filtered by the sequence of two-sided ideals $\widehat{T}_p :=\prod_{m \geq p}^{\infty} H_\Q^{\otimes m}$. It is known that the choice of a Magnus expansion (in the sense of \cite{kawa}), gives an identification of $\widehat{\Q\pi}$, the completed group algebra of $\pi$ (with respect to the filtration induced by the augmentation ideal), with $\widehat{T}$. Furthermore, Massuyeau introduced in \cite{masinf} the notion of \emph{symplectic expansion}. Shortly, a Magnus expansion is a monoid map $\theta$ from $\pi$ to $\widehat{T}$ such that $\theta(x) = 1 + \{x \} + deg_{\geq 2} \text{ for all } x \in \pi$, where $\{x \}$ is the class of $x$ in $H_\Q$. A symplectic expansion is then an expansion taking group-like values in $\widehat{T}$ and sending $\zeta$ to $\operatorname{exp}(-\tilde{\omega})$, where $\tilde{\omega}$ is the element of $H_\Q^{\otimes 2}$ representing the symplectic form through Poincaré duality.

As any element of $\M$ acts naturally on $\widehat{\Q \pi}$, a Magnus expansion provides, via the identification $\widehat{\Q\pi} \simeq \widehat{T}$, a map $T^\theta : \M \rightarrow \operatorname{Aut}(\widehat{T})$. In particular, denoting by $T_\gamma$ the Dehn twist on a simple closed curve $\gamma$ on $\Sigma$, the map $T^\theta(T_\gamma)$ is an automorphism of $\widehat{T}$. We now fix a Magnus expansion, and we call $T^\theta$ the \emph{total Johnson map}, this terminology being justified by the following theorem, derived from \cite[Theorem 3.1]{kawa}. Define, for any $\phi \in \I$, $\tau^\theta(\phi) := T^\theta(\phi)_{\vert H_\Q} - \operatorname{Id}_{\vert H_\Q}$ and write $\tau^\theta = \sum \limits_{k \geq 1} \tau^\theta_k$, with $\tau_k^\theta(\phi) \in \operatorname{Hom}(H_\Q,H_\Q^{\otimes (k+1)})$. Also, consider the rational version $\tau_{k,\Q} := \tau_k \otimes \Q$ of the $k$-th Johnson homomorphism.

\begin{theorem}[Kawazumi]
For any Magnus expansion $\theta$, any $k \geq 1$, and any $\phi \in J_k$, we have 
$$ \tau^\theta_k(\phi) = \tau_{k,\Q}(\phi). $$ where $\mathcal{L}_{k+1}(H) \otimes \Q$ is regarded as a subspace of $H_\Q^{\otimes (k+1)}$ in the usual way.
\label{KK1}
\end{theorem}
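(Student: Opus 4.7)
The plan is to exploit the compatibility between the Magnus expansion $\theta$ and the two natural filtrations in sight: the lower central series of $\pi$ and the filtration of $\widehat{T}$ by the ideals $\widehat{T}_p$. A standard preliminary fact about any Magnus expansion, which I would cite from \cite{kawa} or prove as a short preliminary lemma, is that for $y \in \Gamma_n \pi$ the element $\theta(y) - 1$ lies in $\widehat{T}_n$ and its degree-$n$ component is a Lie element realizing the canonical isomorphism $\Gamma_n\pi/\Gamma_{n+1}\pi \otimes \Q \simeq \mathcal{L}_n(H_\Q)$.

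The heart of the proof is the following filtration lemma: for every $\phi \in J_k$ and every $p \geq 1$, one has $(T^\theta(\phi) - \operatorname{Id})(\widehat{T}_p) \subset \widehat{T}_{p+k}$. I would first establish the case $p = 1$. For $x \in \pi$, the assumption $\phi \in J_k$ gives $\phi(x) x^{-1} \in \Gamma_{k+1} \pi$, so $\theta(\phi(x) x^{-1}) - 1 \in \widehat{T}_{k+1}$, and the identity $T^\theta(\phi)(\theta(x)) = \theta(\phi(x)) = \theta(\phi(x) x^{-1}) \cdot \theta(x)$ immediately yields $T^\theta(\phi)(\theta(x) - 1) - (\theta(x) - 1) \in \widehat{T}_{k+1}$. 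Since the elements $\theta(x) - 1$ for $x \in \pi$ topologically span $\widehat{T}_1$ (they correspond to the spanning set $\{x - 1 : x \in \pi\}$ of the augmentation ideal), continuity and $\Q$-linearity give the conclusion on all of $\widehat{T}_1$. The extension to $p \geq 2$ is purely formal: since $T^\theta(\phi)$ is an algebra morphism, writing $v \in \widehat{T}_p$ as a limit of products $v_1 \cdots v_p$ with $v_i \in \widehat{T}_1$ and expanding $T^\theta(\phi)(v_i) = v_i + r_i$ with $r_i \in \widehat{T}_{k+1}$, every cross term contains at least one $r_i$ and hence sits in $\widehat{T}_{p+k}$.

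The lemma immediately implies that $\tau^\theta_j(\phi) = 0$ for $j < k$, so the first possibly nonzero component is $\tau^\theta_k(\phi)$, and I only need to identify it. Fix $x \in \pi$ and write $\theta(x) - 1 = \{x\} + v$ with $v \in \widehat{T}_2$. By the lemma applied to $v$, the element $T^\theta(\phi)(v) - v$ lies in $\widehat{T}_{k+2}$ and contributes nothing in degree $k+1$. Therefore the degree-$(k+1)$ part of $T^\theta(\phi)(\{x\}) - \{x\}$ coincides with that of $\theta(\phi(x)) - \theta(x) = (\theta(\phi(x) x^{-1}) - 1) \cdot \theta(x)$. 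Since $\theta(x)$ has constant term $1$ and the degree-$(k+1)$ part of $\theta(\phi(x) x^{-1}) - 1$ is the Lie class of $\phi(x) x^{-1}$ in $\mathcal{L}_{k+1}(H_\Q)$, the degree-$(k+1)$ part of the product is precisely this class, which by construction equals $\tau_{k,\Q}(\phi)(\{x\})$.

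The main obstacle is not the algebraic bookkeeping above but the preliminary fact matching the leading term of $\theta(y)$ with the Lie class in $\mathcal{L}_n(H_\Q)$ for $y \in \Gamma_n\pi$. This is foundational for Magnus expansions but still requires an honest induction on $n$ using only the multiplicativity of $\theta$ and the linear part being the projection $\pi \to H$; once this input is available, everything else reduces to carefully tracking degrees through the algebra automorphism $T^\theta(\phi)$.
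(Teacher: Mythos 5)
Your argument is correct. Note that the paper does not actually prove this statement: it is attributed to Kawazumi and cited as a consequence of \cite[Theorem 3.1]{kawa}. What you have written is essentially the standard proof underlying that citation. The two inputs you isolate are exactly the right ones: (i) the leading-term property of a Magnus expansion on $\Gamma_n\pi$ (that $\theta(y)-1\in\widehat{T}_n$ with degree-$n$ part the image of $[y]$ under $\mathcal{L}_n(H_\Q)\hookrightarrow H_\Q^{\otimes n}$), and (ii) the filtration estimate $(T^\theta(\phi)-\operatorname{Id})(\widehat{T}_p)\subset\widehat{T}_{p+k}$ for $\phi\in J_k$, first on the topological spanning set $\{\theta(x)-1\}$ of $\widehat{T}_1$ and then on products. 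The final identification, via $\theta(\phi(x))-\theta(x)=(\theta(\phi(x)x^{-1})-1)\theta(x)$ and the definition $\tau_k(\phi)(\{x\})=[\phi(x)x^{-1}]\in\Gamma_{k+1}\pi/\Gamma_{k+2}\pi\simeq\mathcal{L}_{k+1}(H)$, is exactly how the equality $\tau_k^\theta=\tau_{k,\Q}$ on $J_k$ is obtained. Two cosmetic points: in the step for $p\geq 2$ you should say ``limit of \emph{linear combinations of} products $v_1\cdots v_p$'', and it is worth stating explicitly that $T^\theta(\phi)$ is continuous because the action of $\phi$ on $\widehat{\Q\pi}$ preserves the $I$-adic filtration; neither affects the validity of the argument.
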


We now define $l^\theta:= \operatorname{log}\circ \theta$, which is a map from $\pi$ to $\widehat{T}$, and consider the ``cyclicization'' map $N : \widehat{T}_1 \rightarrow \widehat{T}_1$, defined by $\left.N\right|_{H_\Q^{\otimes p}}=\sum_{m=0}^{p-1} \nu^{m},$ for $p \geq 1,$ where $\nu: H_\Q^{\otimes p} \rightarrow H_\Q^{\otimes p}$ is the map induced by the cyclic permutation of the tensors. We also define, for all $x$ in $\pi$, \begin{equation}L^\theta(x) := \frac{1}{2}N(l^\theta(x)^2) \in H_\Q \otimes \widehat{T}_1= \widehat{T}_2. \label{Ldef}\end{equation} 
The value of $L^\theta$ on $x \in \pi$ actually only depends on the conjugacy class of $x$ and $L^\theta(x) = L^\theta(x^{-1})$. Thus, we can write abusively $L^\theta(\gamma)$ for any closed curve $\gamma$ in $\Sigma$. Furthermore, identifying $H_\Q \otimes \widehat{T}_1$ with $\operatorname{Hom}(H_\Q,\widehat{T}_1)$ by Poincaré duality, such an element $L^\theta(\gamma)$ can be regarded as a weakly nilpotent derivation of $\widehat{T}$. By a \emph{weakly nilpotent} derivation, we mean a derivation $d$ such that $d(\widehat{T}_p) \subset \widehat{T}_p$, and for any $p \geq 1$, there is some $n \geq 1$ such that $d^n(\widehat{T}) \subset \widehat{T}_p$. The exponential of such a derivation is a well-defined automorphism of $\widehat{T}$. 

\begin{rem}
The reader should be aware that there are two ways to identify $H$ (or $H_\Q$) with its dual, and that the conventions chosen in \cite{kawa,KK} and \cite{masinf,MasTur} are different. We chose the convention of the latter, meaning that the isomorphism from $H$ to $H^*$ is given by $x \mapsto \omega(x,-)$.
\end{rem}
We now recall a formula due to Kawazumi and Kuno \cite[Theorem 1.1.1]{KK}.

\begin{theorem}[Kawazumi, Kuno]
Let $\theta$ be a symplectic expansion and $\gamma$ a simple closed
curve on $\Sigma$. Then we have:
\begin{equation}
T^{\theta}\left(T_{\gamma}\right)=e^{L^{\theta}(\gamma)}.
\label{eqKK}
\end{equation}
\label{KK2}
\end{theorem}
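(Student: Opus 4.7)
The plan is to reduce the equality (\ref{eqKK}) to an identity on generators. First, I would check that $\exp(L^\theta(\gamma))$ is a well-defined automorphism: since $L^\theta(\gamma) \in \widehat{T}_2$, the corresponding derivation of $\widehat{T}$ (obtained from $H_\Q \otimes \widehat{T}_1 \simeq \operatorname{Hom}(H_\Q, \widehat{T}_1)$ via Poincaré duality) raises the filtration degree by at least one, hence is weakly nilpotent and can be exponentiated. Both sides of (\ref{eqKK}) are then continuous algebra automorphisms of $\widehat{T}$, so they are determined by their restriction to $H_\Q$. Under the identification $\widehat{T} \simeq \widehat{\Q\pi}$ provided by $\theta$, this reduces the statement to proving
\[ \theta(T_\gamma(x)) \;=\; \exp(L^\theta(\gamma))\bigl(\theta(x)\bigr) \qquad \text{for every } x \in \pi. \]

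Next, I would unfold the left-hand side using the classical Picard-Lefschetz formula for a Dehn twist: for a loop $x$ meeting $\gamma$ transversally at points $p_1, \dots, p_n$ with signs $\epsilon_1, \dots, \epsilon_n$, the image $T_\gamma(x)$ is the concatenation of the subarcs of $x$ between consecutive $p_i$, with a conjugate of $\gamma^{\epsilon_i}$ inserted at each $p_i$. Passing to $l^\theta = \log \circ\, \theta$ and using that $\theta$ is a monoid homomorphism, these insertions translate additively into contributions coming from (conjugates of) $l^\theta(\gamma)$, producing a formal series in $\widehat{T}$ whose first-order term should match $L^\theta(\gamma)$ acting as a derivation on $\theta(x)$.

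The heart of the proof---and the step I expect to be the main obstacle---is this matching. The \emph{symplectic} hypothesis on $\theta$ is essential: the condition $\theta(\zeta) = \exp(-\tilde\omega)$ encodes precisely the compatibility between the geometric intersection pairing at each $p_i$ and the algebraic pairing dual to $\omega$ that is implicit in the derivation interpretation of $L^\theta(\gamma) \in H_\Q \otimes \widehat{T}_1$. Once this is established, $L^\theta(\gamma)$ is recognized as the algebraic avatar of the Goldman bracket $[\gamma, -]$ applied to the free homotopy class of $x$. The first-order identification then follows by a direct computation involving the cyclicization $N$ in the definition (\ref{Ldef}), and the higher-order terms fall into place by iteration: the repeated insertions of $\gamma$ in the Picard-Lefschetz formula combine exactly as the successive powers of the derivation $L^\theta(\gamma)$, assembling into the exponential series on the right-hand side.
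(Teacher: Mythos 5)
First, a remark on the comparison itself: the paper does not prove Theorem \ref{KK2} — it is quoted from Kawazumi--Kuno \cite[Theorem 1.1.1]{KK} — so there is no internal proof to measure your sketch against; I am judging it against the argument in that reference. Your overall strategy is the right one and is essentially theirs: reduce to generators of $\pi$, describe $T_\gamma(x)$ by inserting conjugates of $\gamma^{\pm 1}$ at the intersection points, and match this with the exponential of the derivation attached to $L^\theta(\gamma)$. One minor flaw before the main issue: your justification that $\exp(L^\theta(\gamma))$ is defined fails for non-separating $\gamma$. There $l^\theta(\gamma)$ starts in degree $1$, so $L^\theta(\gamma)$ has a nonzero component $\{\gamma\}\otimes\{\gamma\}\in H_\Q^{\otimes 2}$, and the corresponding derivation does \emph{not} raise the filtration degree: its degree-zero part is the transvection $h \mapsto \omega(\{\gamma\},h)\,\{\gamma\}$. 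Weak nilpotence still holds because that transvection squares to zero, but this needs its own (easy) argument rather than the degree-raising one you give.

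The serious gap is your final paragraph. You correctly single out the matching of the Picard--Lefschetz insertions with the powers of the derivation as ``the main obstacle,'' and then you assert it rather than prove it: ``the higher-order terms fall into place by iteration'' is precisely the entire content of the theorem. The difficulty is that the derivation attached to $L^\theta(\gamma)$ acts on \emph{every} tensor factor of $\theta(x)$, including the factors produced by its own earlier applications, so there is no formal reason why its exponential applied to $\theta(x)$ should reassemble into the group-like element $\theta(T_\gamma(x))$; a first-order check plus ``iteration'' does not force this. The proof in \cite{KK} rests on a specific algebraic identity — that the exponential of the operator attached to $\frac{1}{2}N\bigl(l^\theta(\gamma)^2\bigr)$ acts on an arc crossing an annular neighborhood of $\gamma$ exactly as insertion of $\theta(\gamma)^{\pm 1}$ — combined with a locality argument reducing the whole computation to that annulus. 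That identity is where the precise choice of the quadratic expression $\frac{1}{2}N((\log)^2)$ and the symplectic condition on $\theta$ actually do their work, and without establishing it your argument does not close.
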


By degree considerations, making use of equation \eqref{eqKK}, Kawazumi and Kuno also get the following simple formulas, which are extracted from \cite[Theorem 6.3.1]{KK}:

\begin{coroll}[Kawazumi, Kuno]
Let $\theta$ be a symplectic expansion and $\gamma$ a bounding simple closed curve. Then we have: $$
\begin{array}{l}
\tau_{2}^{\theta}\left(T_{\gamma}\right)=L^\theta_{4}(\gamma) \\
\tau_{3}^{\theta}\left(T_{\gamma}\right)=L^\theta_{5}(\gamma).
\end{array}$$
\label{KK3}
\end{coroll}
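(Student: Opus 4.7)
The plan is to combine Theorem \ref{KK2} with the definition of $\tau^\theta$ and extract the two relevant homogeneous components, exploiting that $\gamma$ bounds to push $L^\theta(\gamma)$ deep into the filtration of $\widehat{T}$.

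First I would locate $L^\theta(\gamma)$ in the filtration. Since $\gamma$ is a BSCC, its class in $H_\Q$ vanishes, so $\theta(\gamma) \in 1 + \widehat{T}_2$ and consequently $l^\theta(\gamma) = \log \theta(\gamma) \in \widehat{T}_2$. Squaring yields $l^\theta(\gamma)^2 \in \widehat{T}_4$, and because the cyclicization map $N$ preserves the filtration, formula \eqref{Ldef} gives $L^\theta(\gamma) \in \widehat{T}_4$. Writing $L^\theta(\gamma) = \sum_{m \geq 4} L_m^\theta(\gamma)$ with $L_m^\theta(\gamma) \in H_\Q^{\otimes m}$, the Poincaré-duality identification $H_\Q \otimes \widehat{T}_1 \simeq \operatorname{Hom}(H_\Q, \widehat{T}_1)$ regards each $L_m^\theta(\gamma)$ as a derivation sending $H_\Q$ into $H_\Q^{\otimes (m-1)}$; by the Leibniz rule, it then sends $H_\Q^{\otimes p}$ into $H_\Q^{\otimes (p+m-2)}$.

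Next, by Theorem \ref{KK2},
\[ T^\theta(T_\gamma) = e^{L^\theta(\gamma)} = \operatorname{Id} + \sum_{n \geq 1} \frac{1}{n!}\, L^\theta(\gamma)^n, \]
the power meaning iterated composition of derivations. Restricting to $H_\Q$ and subtracting the identity,
\[ \tau^\theta(T_\gamma) = \sum_{n \geq 1} \frac{1}{n!}\, L^\theta(\gamma)^n\big|_{H_\Q}. \]

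The conclusion is then a matter of degree bookkeeping. The single-derivation contribution $L_m^\theta(\gamma)|_{H_\Q}$ lies in $\operatorname{Hom}(H_\Q, H_\Q^{\otimes(m-1)})$, so $m = 4$ contributes precisely to the $\tau_2^\theta$ component and $m = 5$ to $\tau_3^\theta$. For $n \geq 2$, any iterated composition $L_{m_1}^\theta(\gamma) \circ \cdots \circ L_{m_n}^\theta(\gamma)$ applied to $H_\Q$ lands in tensor degree $1 + \sum_{i=1}^n (m_i - 2) \geq 1 + 2n \geq 5$, so these higher terms never contaminate the components in $H_\Q^{\otimes 3}$ or $H_\Q^{\otimes 4}$. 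This forces $\tau_2^\theta(T_\gamma) = L_4^\theta(\gamma)$ and $\tau_3^\theta(T_\gamma) = L_5^\theta(\gamma)$. I do not expect any serious obstacle: the argument is pure filtration bookkeeping, and the only subtlety is to verify that the Poincaré-duality convention used by the paper really makes an element of $H_\Q^{\otimes m}$ correspond to the expected derivation in $\operatorname{Hom}(H_\Q, H_\Q^{\otimes(m-1)})$ that extends to $\widehat{T}$ via Leibniz.
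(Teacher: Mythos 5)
Your argument is correct and is precisely the ``degree considerations'' that the paper invokes when it extracts this corollary from Theorem \ref{KK2}: since $\gamma$ bounds, $l^\theta(\gamma)\in\widehat{T}_2$, hence $L^\theta(\gamma)\in\widehat{T}_4$ acts as a derivation raising tensor degree by at least $2$, so only the $n=1$ term of the exponential can contribute to the components of $\tau^\theta(T_\gamma)$ in $\operatorname{Hom}(H_\Q,H_\Q^{\otimes 3})$ and $\operatorname{Hom}(H_\Q,H_\Q^{\otimes 4})$. The paper gives no further proof (it cites Kawazumi--Kuno), and your write-up supplies exactly the intended bookkeeping.
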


\noindent Hence, as implied by Theorem \ref{KK1}, if $\varphi$ is given as a product of Dehn twists along BSCC and is in $J_2$ (resp. $J_3$), it is easy to compute $\tau_2(\varphi) = \tau_2^\theta(\varphi)$ (resp. $\tau_3(\varphi) = \tau_3^\theta(\varphi)$) by using Corollary \ref{KK3}.

In order to do so, we will need an instance of a symplectic expansion $\theta$. As explained in \cite{masinf}, we can build inductively such an expansion and write explicitly its values on generators of $\pi$ in low degrees. The following proposition shows that for our purposes we only need to know the values of a symplectic expansion up to order 2.

\begin{propo}
Let $\gamma$ be a bounding simple closed curve. In order to compute $L^\theta_{4}(\gamma)$ and $L^\theta_{5}(\gamma)$, we only need the expression of $l^\theta$ up to degree~$2$.
\end{propo}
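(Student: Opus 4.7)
The plan is to reduce the claim to a Baker--Campbell--Hausdorff (BCH) degree-counting argument. First, since $\gamma$ is bounding, any based representative $x \in \pi$ of (a conjugate of) $\gamma$ has trivial homology class, so the degree-1 component of $l^\theta(x) = \log \theta(x)$ vanishes and $l^\theta(x) \in \widehat{T}_2$. Writing $l^\theta(x) = \sum_{k \geq 2} l_k^\theta(x)$ and squaring, the degree-4 and degree-5 parts of $l^\theta(x)^2$ are
\begin{equation*}
\left[l^\theta(x)^2\right]_4 = l_2^\theta(x) \otimes l_2^\theta(x), \qquad \left[l^\theta(x)^2\right]_5 = l_2^\theta(x) \otimes l_3^\theta(x) + l_3^\theta(x) \otimes l_2^\theta(x),
\end{equation*}
so applying $\frac{1}{2}N$ yields that $L^\theta_4(\gamma)$ and $L^\theta_5(\gamma)$ are determined entirely by $l_2^\theta(x)$ and $l_3^\theta(x)$.

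Next, I would choose a concrete representative $x$ of the free homotopy class of $\gamma$. Since $\gamma$ is a BSCC of some genus $h$, we may write $x = \prod_{i=1}^h [g_i, h_i]$ for suitable $g_i, h_i \in \pi$ (for instance a symplectic basis of the bounded subsurface, noting that $L^\theta$ is insensitive to conjugation). Letting $G_i := \log \theta(g_i)$ and $H_i := \log \theta(h_i)$, the value $\log \theta(x)$ can be computed by iteratively applying BCH to the product $\prod_i e^{G_i} e^{H_i} e^{-G_i} e^{-H_i}$. Crucially, the signs in the exponentials cancel identically, so the linear (non-bracketed) sum $\sum_i (G_i + H_i - G_i - H_i)$ vanishes, and the resulting BCH expansion is a sum of iterated Lie brackets of the $G_i, H_i$ alone.

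The conclusion then follows from a simple degree count: any iterated Lie bracket of total tensorial degree $k$ must have each of its constituent factors of tensorial degree at most $k - 1$, because a bracket involves at least two factors each of degree $\geq 1$. Hence $l_2^\theta(x)$ depends only on the degree-1 parts of the $G_i, H_i$ — which are simply the classes $\{g_i\}, \{h_i\} \in H_\Q$, known without any knowledge of $\theta$ — while $l_3^\theta(x)$ depends only on the degree-$\leq 2$ parts of the $G_i, H_i$. Combining with the first paragraph proves the proposition. The main subtlety is the vanishing of the linear BCH terms, which relies precisely on the ability to write $\gamma$ as an explicit product of commutators of generators of $\pi$; this is exactly what being bounding provides. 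Without this, a degree-3 component of some $G_i$ or $H_i$ could survive as a linear term in $l_3^\theta(x)$, forcing knowledge of $l^\theta$ up to degree 3.
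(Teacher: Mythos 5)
Your proof is correct, and it takes the route the paper mentions only in passing. The first reduction---that $L^\theta_4(\gamma)$ and $L^\theta_5(\gamma)$ involve only $l_2^\theta(x)$ and $l_3^\theta(x)$ because $l^\theta(x)$ starts in degree $2$ for a bounding curve---is exactly the paper's first step. For the remaining point, the paper expands $\theta([U,V])=\theta(U)\theta(V)\theta(U)^{-1}\theta(V)^{-1}$ by hand up to degree $3$, observes that the $\theta_3(U)$ and $\theta_3(V)$ contributions cancel, and extracts the explicit closed formula $l^\theta([U,V]) = [u,v]+[u,l_2^\theta(V)]+[l_2^\theta(U),v]+\tfrac{1}{2}[u,[u,v]]-\tfrac{1}{2}[v,[v,u]]$ modulo terms of degree $\geq 4$, which is what is later fed into the computer program; it then notes in a single sentence that one could instead use Baker--Campbell--Hausdorff, which is precisely what you do. Your degree count (the linear BCH terms vanish because the exponents in a product of commutators sum to zero, and an iterated bracket of length $\geq 2$ contributing to total degree $k$ can only involve homogeneous components of degree $\leq k-1$) is cleaner and more general: it shows at once that $l_k^\theta$ of a product of commutators needs $l^\theta$ only up to degree $k-1$, and it treats a product of several commutators uniformly, whereas the paper's displayed computation handles a single commutator and leaves the (easy, since cross terms have degree $\geq 4$) extension to products implicit. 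What your argument does not produce is the explicit formula for $l^\theta([U,V])$, which the proposition does not require but which the paper actually uses downstream; for the statement as posed, your proof is complete.
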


\begin{proof}
As $\gamma$ is separating, any of its representative in $\pi$ will be a product of commutators. Therefore $l^{\theta}(\gamma)$ starts in degree~$2$ and it follows that we only need the expression of $l^\theta$ up to order $3$ to compute $L^\theta_{4}$ and $L^\theta_{5}$, due to the square in formula \eqref{Ldef}. Let $U,V$ be elements of $\pi$, let $u,v$ be their respective classes in $H$, and write
\begin{align*}
\theta(U) &= 1 + u + \theta_2(U) +\theta_3(U) + deg_{\geq 4} \in \widehat{T}\\
\theta(V) &= 1 + v + \theta_2(V) +\theta_3(V) + deg_{\geq 4} \in \widehat{T}
\end{align*}
where $\theta_i(X)$ is homogeneous of degree $i$. The image by $\theta$ of the inverse of an element $X$ in $\pi$ is given by $\theta(X^{-1})=  \sum_{i=0}^\infty (-1)^i (\theta(X)-1)^i$. Then we compute directly
\begin{align*}
\theta([U,V]) &= \theta(U)\theta(V)\theta(U)^{-1}\theta(V)^{-1} \\ &= 1 + uv-vu +vuv -uvu +vu^2 - uv^2 \\&~~~ + u \theta_2(V) - \theta_2(V)u +\theta_2(U)v - v \theta_2(U) + deg_{\geq4} 
\end{align*}
hence
\begin{align*}
l^\theta([U,V]) &= uv-vu +vuv -uvu +vu^2 - uv^2 \\&~~~ + u \theta_2(V) - \theta_2(V)u +\theta_2(U)v - v \theta_2(U) + deg_{\geq4}\\&= uv -vu + vuv -uvu + \frac{1}{2}(u^2v -v^2u-uv^2+vu^2) \\&~~~ + u l^\theta_2(V) - v l^\theta_2(U) + l^\theta_2(U)v -  l^\theta_2(V)u + deg_{\geq4} \\
&= [u,v] +[u,l_2^\theta(V)]+ [l_2^\theta(U),v] +\frac{1}{2} [u,[u,v]] - \frac{1}{2}[v,[v,u]]+ deg_{\geq4}
\end{align*} which depends only on the expression of $l^\theta$ up to degree $2$. Alternatively, one can use the Baker-Campbell-Hausdorff formula to compute $l^\theta([U,V])$.
\end{proof}

Consequently, we will use for our computations a symplectic expansion given up to degree~$2$. Specifically, we use the truncation of the one given up to degree 4 in \cite[Example 2.19]{masinf}. The fact that it verifies the symplectic condition (up to degree 3) can be checked by hand. The based versions of the curves $\alpha_i$ and $\beta_i$ defining elements of $\pi$ are as shown in Figure $\ref{basispi}$.

\begin{propo}
There is a symplectic expansion $\theta$ of $\pi$, which is given in degree $\leq 2$ by \begin{align*}
l^\theta\left(\alpha_{i}\right)&=a_{i}-\frac{1}{2}\left[a_{i}, b_{i}\right] + deg_{\geq3} \\
l^\theta\left(\beta_{i}\right)&=b_{i}-\frac{1}{2}\left[a_{i}, b_{i}\right]+ deg_{\geq3}.
\end{align*}
\end{propo}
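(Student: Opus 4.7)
The plan is to combine a reference with a direct low-degree verification. For the existence of $\theta$ I would invoke the explicit inductive construction of a symplectic expansion carried out in \cite[Example 2.19]{masinf} (written there up to degree $4$). What really needs to be argued is that, after truncating that construction at degree $2$, the resulting formulas for $l^\theta(\alpha_i)$ and $l^\theta(\beta_i)$ do agree with the ones stated; equivalently, that the stated formulas are compatible with a symplectic expansion, i.e.\ can be extended to all degrees.

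Operationally, I would verify the two defining conditions of a symplectic expansion directly at degree $\leq 2$. The first is that $\theta$ takes group-like values, equivalently that $l^\theta$ lands in the free Lie algebra in $\widehat T$; the proposed formulas are manifestly Lie polynomials in $a_i$ and $b_i$, so this is immediate. The second is the boundary condition $\theta(\zeta)=\exp(-\tilde\omega)$, equivalently $l^\theta(\zeta)=-\tilde\omega=-\sum_i[a_i,b_i]$ in degree $2$ together with vanishing in degree $3$. Writing $\zeta$ as the usual product of commutators $[\alpha_i,\beta_i]$ (with the orientation convention fixed by Figure \ref{basispi}) and applying the formula for $l^\theta([U,V])$ from the preceding proposition (equivalently the Baker--Campbell--Hausdorff formula), the degree $2$ part is at once $\pm\sum_i[a_i,b_i]$, and in degree $3$ the contributions coming from $[a_i,l_2^\theta(\beta_i)]$, $[l_2^\theta(\alpha_i),b_i]$ and the BCH correction $\tfrac12[a_i,[a_i,b_i]]-\tfrac12[b_i,[b_i,a_i]]$ cancel pairwise, precisely because the two second-order coefficients are both equal to $-\tfrac12[a_i,b_i]$. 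Cross terms between distinct factors $[\alpha_i,\beta_i]$ and $[\alpha_j,\beta_j]$ only contribute in degree $\geq 4$, since the generators involved are disjoint and the BCH correction in degree $3$ for a product of commuting-in-degree-$2$ elements vanishes.

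The main obstacle is not conceptual but bookkeeping: carefully tracking signs and the ordering of factors coming from the chosen orientation of $\partial\Sigma$, and organizing the BCH computation cleanly enough that the cancellations are apparent. Since the explicit extension to degree $4$ produced in \cite{masinf} is known to be symplectic, there is no obstruction at the degree we work with, so the degree $\leq 2$ truncation of that construction is guaranteed to give the stated formulas and no verification can fail.
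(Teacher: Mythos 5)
Your strategy is the same as the paper's, which offers no proof beyond noting that these formulas are the degree-$\leq 2$ truncation of the expansion of \cite[Example 2.19]{masinf} and that the symplectic condition ``can be checked by hand'' (the appendix program also performs this check). Your sketch of that hand check is correct in outline, but the one explicit computation you commit to fails as literally written: for the word $[\alpha_i,\beta_i]=\alpha_i\beta_i\alpha_i^{-1}\beta_i^{-1}$, the four degree-$3$ contributions are $[a_i,l_2^\theta(\beta_i)]=-\tfrac12[a_i,[a_i,b_i]]$, $[l_2^\theta(\alpha_i),b_i]=+\tfrac12[b_i,[a_i,b_i]]$, $+\tfrac12[a_i,[a_i,b_i]]$, and $-\tfrac12[b_i,[b_i,a_i]]=+\tfrac12[b_i,[a_i,b_i]]$, which sum to $[b_i,[a_i,b_i]]\neq 0$ rather than cancelling pairwise. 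The cancellation does occur, but only for the boundary word actually determined by Figure \ref{basispi}, namely $\zeta=\prod_i\beta_i^{-1}\alpha_i\beta_i\alpha_i^{-1}$ (the word used in the appendix code), for which the four terms pair off exactly as you describe. So the sign and ordering ``bookkeeping'' you defer to the end is not cosmetic: with the wrong commutator convention the verification genuinely fails, and getting the boundary word right is the only nontrivial content of the hand check. The remaining ingredients --- existence of a full symplectic extension supplied by \cite{masinf}, group-likeness in low degree, and the observation that cross terms between distinct handles only enter in degree $\geq 4$ --- are all correct.
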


Before giving the description of the elements $\psi$ and $\varphi$ we aim at, we clarify the way we compute in $D_k(H)$, for $1 \leq k \leq 3$. 

\subsection{Derivations of degree 1, 2 and 3}
We recall the description of the spaces $D_k(H)$ in terms of tree-like Jacobi diagrams. More precisely, we consider the spaces of tree-like Jacobi diagram $\mathcal{A}^t(H)$ and rooted tree-like Jacobi diagrams $\mathcal{A}^{t,r}(H)$. A tree is a connected graph that is contractible as a topological space. From now on, by ``a tree", we mean a uni-trivalent tree $T$, possibly rooted, whose trivalent vertices are oriented (the orientation being counterclockwise in all the figures), and whose univalent vertices are colored by elements of $H$. The cardinality of the set of trivalent vertices $v_3(T)$ is the \textit{degree} of the tree $T$. The space $\mathcal{A}_k^t(H)$ (resp. $\mathcal{A}_k^{t,r}(H) $) is the $\mathbb{Z}$-module generated by trees (resp. rooted trees) of degree $k$ subject to some relations: multilinearity of the labels, the \textit{AS relation}, and the \textit{IHX relation} (see  Figure \ref{jacobirelations}). 

\begin{figure}[h]
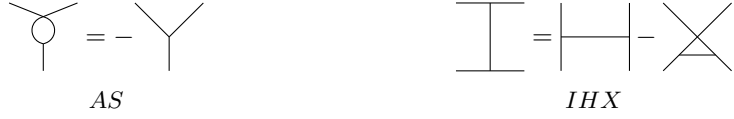

\hspace{15pt}
\vspace{12pt}
\centering
\begin{subfigure}[b]{0.45\textwidth}
\centering
${\tikz[baseline=10pt, scale = 0.6]{
\draw[color=wqwqwq] (0.75,0)-- (0.75,0.6);
\draw[color=wqwqwq] (0.75,0.6) arc (-80:70:0.3) -- (0,1.5);
\draw[color=wqwqwq] (0.75,0.6) arc (-100:-250:0.3)--(1.5,1.5) }} = - {\tikz[baseline=10pt, scale = 0.6]{
\draw[color=wqwqwq] (0.75,0)-- (0.75,0.75);
\draw[color=wqwqwq] (0.75,0.75) -- (0,1.5);
\draw[color=wqwqwq] (0.75,0.75) --(1.5,1.5) }}$

 \caption*{$AS$}
\end{subfigure}
\hfill
\begin{subfigure}[b]{0.45\textwidth}
\centering
${\tikz[baseline=10pt, scale = 0.6]{
\draw [color=wqwqwq] (0,0)-- (1.5,0);
\draw [color=wqwqwq] (0.75,0)-- (0.75,1.5);
\draw [color=wqwqwq] (0,1.5)-- (1.5,1.5);}} = {\tikz[baseline=10pt, scale = 0.6]{
\draw [color=wqwqwq] (0,0)-- (0,1.5);
\draw [color=wqwqwq] (0,0.75)-- (1.5,0.75);
\draw [color=wqwqwq] (1.5,0)-- (1.5,1.5);}} - {\tikz[baseline=10pt, scale = 0.6]{
\draw [color=wqwqwq] (0,0)-- (1.5,1.5);
\draw [color=wqwqwq] (0,1.5)-- (1.5,0);
\draw [color=wqwqwq] (0.35,0.35)-- (1.15,0.35);}}$
\caption*{$IHX$}
\end{subfigure}
\hspace{15pt}
\caption{The $AS$ and $IHX$ relations}
\label{jacobirelations}
\end{figure}
We refer the reader to \cite{levtrees} for further details about what follows. The spaces $\mathcal{A}_k^t(H)$ and $\mathcal{A}_k^{t,r}(H)$ assemble in two graded spaces $\mathcal{A}^t(H)$ and $\mathcal{A}^{t,r}(H) $ endowed respectively with a Lie bracket and a quasi-Lie bracket. For the bracket of $\mathcal{A}^t(H)$, take two trees, and use all the ways to contract external vertices from the first one with the second one using the symplectic form $\omega$. For $\mathcal{A}^{t,r}(H)$, take two trees, glue them along their roots, and add a new root attached to the gluing point (the root must be placed so that the root, the first tree and the second tree are in the clockwise order).

We also define, for $k \geq 1$, maps :
\begin{align*}
 \eta_k : \mathcal{A}^t_k (H) & \longrightarrow D_k(H) \\  T &  \longmapsto  \sum_{x\in v_1(T)} l_x \otimes T^x
\end{align*}
\noindent where $v_1(T)$ is the set of univalent vertices, $l_x$ is the element of $H$ coloring the vertex $x$ and $T^x$ is the rooted tree obtained by setting $x$ to be the root in $T$. A rooted tree can then be read as an element of $\mathcal{L}_{k+1}(H)$ by setting that $\ltritree{*}{$a$}{$b$}$ corresponds to $[b,a]$. These maps, to which we refer as ``the expansion maps", assemble into a graded Lie algebra morphism. The first expansion map $\eta_1$ is an isomorphism, hence any tree with $1$ trivalent vertex in the sequel will represent an element of $D_1(H)$.

The rationalization of the expansion $\eta_{k,\Q} : \mathcal{A}^t_k (H) \otimes \Q \rightarrow D_k(H) \otimes \Q$ is an isomorphism \cite{masb}. Notice that $D_k(H)$ is a free abelian group, and hence is included as a $\Z$-module in $D_k(H) \otimes \Q$. Furthermore, as proved in \cite{levtrees}, any element in $D_2(H) \subset D_2(H) \otimes \Q$ can be obtained by linear combination of expansion of elements of $\mathcal{A}^t_2 (H)$, and expansion of halves of symmetric trees of degree 2, written $a \odot b := \eta_{2,\Q} \bigg(\frac{1}{2} \ltree{$a$}{$b$}{$a$}{$b$}\bigg)$ . Any element in $D_3(H)$ is obtained by summing expansions of elements of $\mathcal{A}^t_3 (H)$ (but $\eta_3$ is not injective). We will consequently refer to such trees as elements of $D_k(H) \subset H \otimes \mathcal{L}_{k+1}(H) \subset H_\Q \otimes \widehat{T_1}$, for $k=2,3$, getting rid of the map $\eta$ that should appear everywhere. We now recall an important formula from \cite{mor}.

\begin{lemma}[Morita]
Let $\gamma$ be a SCC bounding a subsurface $F$ of genus $h$ in $\Sigma$, and let $(u_i,v_i)_{ 1 \leq i \leq h}$ be any symplectic basis of the first homology group of $F$, then we have: \[ \tau_2(T_\gamma) = \sum\limits_{i = 1}^{h} u_i \odot v_i + \sum\limits_{\substack{ i,j = 1\\ i < j}}^{h} \ltree{$u_i$}{$v_i$}{$u_j$}{$v_j$} \in D_2(H). \]
\label{morformula}
\end{lemma}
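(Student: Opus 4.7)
The plan is to invoke Corollary \ref{KK3}, which identifies $\tau_2(T_\gamma)$ with the degree-$4$ component $L^\theta_4(\gamma)$ of $L^\theta(\gamma) = \tfrac{1}{2} N(l^\theta(\gamma)^2)$ computed with respect to any symplectic expansion $\theta$. My strategy is to expand this cyclic tensor explicitly from a commutator presentation of $\gamma$, and then translate the answer back into the tree-derivation formalism of $D_2(H)$.

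First, since $\gamma$ bounds the genus-$h$ subsurface $F$, one can choose lifts $U_i, V_i \in \pi_1(F)$ of the symplectic basis $(u_i, v_i)$ of $H_1(F)$ such that a representative of the based homotopy class of $\gamma$ is conjugate in $\pi$ to $\prod_{i=1}^{h} [U_i, V_i]$. Because $L^\theta$ depends only on the conjugacy class of $\gamma$, we may work with this representative. Since $[\gamma] \in \Gamma_2 \pi$, the series $l^\theta(\gamma)$ starts in degree $2$; expanding $\theta$ on each commutator factor as in the preceding proposition and then taking the logarithm of the product yields
\[
l^\theta_2(\gamma) = \sum_{i=1}^{h} [u_i, v_i] \in H_\Q^{\otimes 2},
\]
independently of the chosen symplectic expansion.

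Only the degree-$2$ part of $l^\theta(\gamma)$ contributes to the degree-$4$ part of its square, so
\[
L^\theta_4(\gamma) = \tfrac{1}{2}\, N\!\left( \sum_{i,j=1}^{h} [u_i, v_i]\, [u_j, v_j] \right),
\]
which I would split into a diagonal sum over $i=j$ and a symmetrized sum over ordered pairs $i<j$. The diagonal contribution $\tfrac{1}{2} N([u_i, v_i]^2)$ is a symmetric cyclic tensor in $H_\Q^{\otimes 4}$, and via the identification $H_\Q^{\otimes 4} \simeq H_\Q \otimes \widehat{T}_1 \simeq \operatorname{Hom}(H_\Q, \widehat{T}_1)$ given by Poincaré duality, it precisely matches the half-symmetric derivation $u_i \odot v_i$ defined earlier. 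The mixed contribution $\tfrac{1}{2} N\bigl([u_i, v_i][u_j, v_j] + [u_j, v_j][u_i, v_i]\bigr)$ for $i<j$ expands into a cyclic tensor whose dual is the image under $\eta_2$ of the tree $\ltree{$u_i$}{$v_i$}{$u_j$}{$v_j$}$, read with the convention $\ltritree{*}{$a$}{$b$} = [b,a]$.

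The main obstacle is precisely this last translation step: matching the cyclically symmetrized four-tensors with the tree-expansions requires careful bookkeeping of the Poincaré-duality convention specified in the earlier remark ($x \mapsto \omega(x,-)$), of the $AS$-antisymmetry at each trivalent vertex, and of the factor $\tfrac{1}{2}$ in the definition of $a \odot b$. Once these conventions are pinned down, summing the diagonal and off-diagonal contributions reproduces exactly the formula claimed for $\tau_2(T_\gamma)$.
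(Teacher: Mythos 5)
The paper does not actually prove this lemma: it is quoted from Morita \cite{mor}, whose original argument computes the action of $T_\gamma$ on a nilpotent quotient of $\pi$ directly. Your route is genuinely different and, given the machinery already set up in Section \ref{sec2}, more economical: since $T_\gamma\in\K=J_2$, Theorem \ref{KK1} and Corollary \ref{KK3} give $\tau_2(T_\gamma)=L^\theta_4(\gamma)$ in $D_2(H)\otimes\Q$ (which suffices, as $D_2(H)$ injects into its rationalization), and your reduction of $L^\theta_4(\gamma)$ to $\tfrac12 N\bigl((\sum_i[u_i,v_i])^2\bigr)$ is correct --- $\gamma$ is conjugate to $\prod_i[U_i,V_i]$, the series $l^\theta(\gamma)$ has no terms in degree $<2$, and its degree-$2$ part is the canonical image of $\gamma$ in $\Gamma_2\pi/\Gamma_3\pi\simeq\mathcal{L}_2(H)$, independently of $\theta$. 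Two points need to be firmed up, however. First, for an \emph{arbitrary} symplectic basis $(u_i,v_i)$ of $H_1(F)$ you should justify that the $U_i,V_i$ can be chosen to lift it (e.g.\ the mapping class group of $F$ surjects onto $\operatorname{Sp}(H_1(F))$, so every symplectic basis is realized by a geometric cut system whose based lifts give the boundary word $\prod_i[U_i,V_i]$). Second, and more importantly, the entire content of the proof now sits in the final dictionary, which you describe but do not carry out: under $H_\Q\otimes\widehat{T}_1\simeq\operatorname{Hom}(H_\Q,\widehat{T}_1)$, $x\otimes t\mapsto\omega(x,-)t$, and the expansion map $\eta_2$, one must verify
\[ \tfrac{1}{2}N\bigl([u,v]^2\bigr)=2uvuv+2vuvu-uvvu-uuvv-vuuv-vvuu \;=\; u\odot v, \qquad N\bigl([u_i,v_i][u_j,v_j]\bigr)=\ltree{$u_i$}{$v_i$}{$u_j$}{$v_j$}, \]
with the $AS$ sign convention, the reading rule $\ltritree{*}{$a$}{$b$}=[b,a]$, and the factor $\tfrac12$ in $a\odot b$ all accounted for. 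This is a finite check on two four-tensors, but it is the only place a sign or normalization error could invalidate the lemma (and is essentially the content of \cite[Theorem 6.3.1]{KK}), so it should be written out rather than asserted.
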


\label{sectrees}
\subsection{The map $\psi$ as a product of Dehn twists}

In this section, we take $\Sigma:= \Sigma_{2,1}$. Recall that $\kp$ and $\kpp$ denote respectively the subgroups of $\K$ generated by BSCC maps of genus 1 and 2. In \cite{masY3}, following a claim by Morita in $\cite{mor}$, Massuyeau and Meilhan explicited an element of $D_2(H)$ in $3(\tau_2(\kpp))$ which is also in $\tau_2(\kp)$. Indeed it can be checked that, in $D_2(H)$:
\begin{align}
&3(\frac{1}{2}\ltree{$a_1$}{$b_1$}{$a_1$}{$b_1$} + \ltree{$a_1$}{$b_1$}{$a_2$}{$b_2$}+ \frac{1}{2}\ltree{$a_2$}{$b_2$}{$a_2$}{$b_2$})\notag\\  =& 7\cdot \odottree{a_1}{b_1} + 2\cdot\odottree{a_2}{b_2} -\odottree{a_1}{(b_1+b_2)} +\odottree{(b_1+a_2)}{b_2}-\odottree{(a_1+a_2)}{b_1}\notag\\
&- \odottree{(a_1+b_1+a_2)}{b_2} + \odottree{(a_1+a_2+b_2)}{b_1} +\odottree{a_1}{(b_1+a_2+b_2)} \label{calculs1}\\&-\odottree{a_2}{(a_1+b_1+b_2)}+2 \cdot \odottree{a_1}{(b_1+a_2)}+2\cdot\odottree{a_2}{(a_1+b_2)}
-\odottree{(a_1-b_2)}{b_1} \notag\\ &-\odottree{a_1}{(b_1-a_2)} -\odottree{(2a_1+b_2)}{(b_1+a_2)} +\odottree{(a_1+b_1+a_2)}{(a_1+b_1+b_2)}.\notag 
\end{align}

The right-hand side of equation \eqref{calculs1} is in $\tau_2(\kp)$. Indeed, by Lemma \ref{morformula} applied to the boundary of the neighborhood of two simple closed curves with a single intersection point inducing $u,v \in H$ such that $\omega(u,v) = 1$, we get that $u \odot v \in \tau_2(\kp)$. The left-hand side can be obtained as $\tau_2(T_{\gamma_2}^3)$ (where $\gamma_2$ is drawn in Figure \ref{twistfig} and is isotopic to the boundary), hence is in $\tau_2(\kpp)$. We deduce that there is a map $\psi_1$ (not unique), which is a product of Dehn twists on BSCC of genus 1, such that $\psi := T_{\gamma_2}^{-{3}} \psi_1$ belongs to $J_3$. Furthermore, we deduce from \eqref{eqmorita} that $\lambda(\psi) = -\frac{1}{24}(d(\psi)) = 1$, considering that $d(T_{\gamma_2})=8$ and that $d$ is trivial over $\kp$.
\begin{figure}[h]
\centering
\begin{subfigure}[b]{0.32\textwidth}
\includegraphics[scale=0.45]{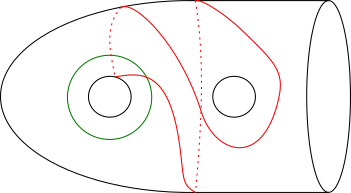}
\caption{Spine of $T_1$}
\end{subfigure}
\begin{subfigure}[b]{0.32\textwidth}
\includegraphics[scale=0.45]{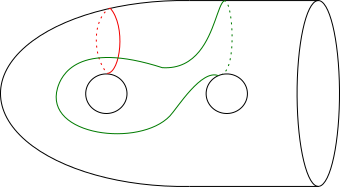}
\caption{Spine of $T_2$}
\end{subfigure}
\begin{subfigure}[b]{0.32\textwidth}
\includegraphics[scale=0.45]{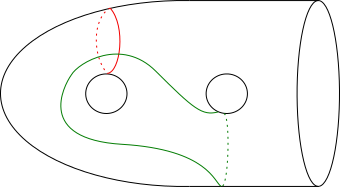}
\caption{Spine of $T_3$}
\end{subfigure}
\begin{subfigure}[b]{0.32\textwidth}
\includegraphics[scale=0.45]{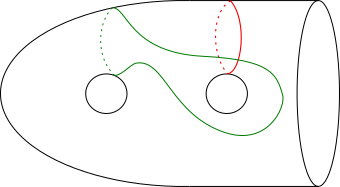}
\caption{Spine of $T_4$}
\end{subfigure}
\begin{subfigure}[b]{0.32\textwidth}
\includegraphics[scale=0.45]{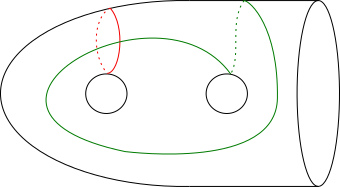}
\caption{Spine of $T_5$}
\end{subfigure}
\begin{subfigure}[b]{0.32\textwidth}
\includegraphics[scale=0.45]{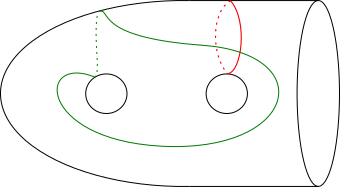}
\caption{Spine of $T_6$}
\end{subfigure}
\begin{subfigure}[b]{0.32\textwidth}
\includegraphics[scale=0.45]{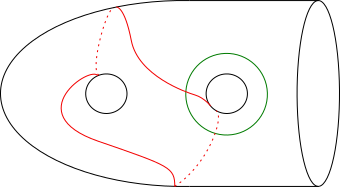}
\caption{Spine of $T_7$}
\end{subfigure}
\begin{subfigure}[b]{0.32\textwidth}
\includegraphics[scale=0.45]{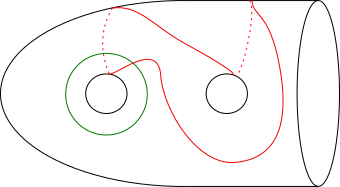}
\caption{Spine of $T_8$}
\end{subfigure}
\begin{subfigure}[b]{0.32\textwidth}
\includegraphics[scale=0.45]{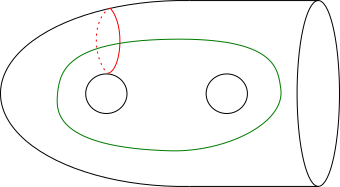}
\caption{Spine of $T_9$}
\end{subfigure}
\begin{subfigure}[b]{0.32\textwidth}
\includegraphics[scale=0.45]{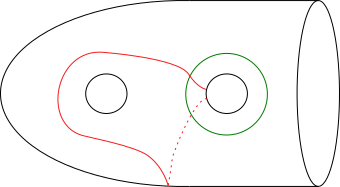}
\caption{Spine of $T_{10}$}
\end{subfigure}
\begin{subfigure}[b]{0.32\textwidth}
\includegraphics[scale=0.45]{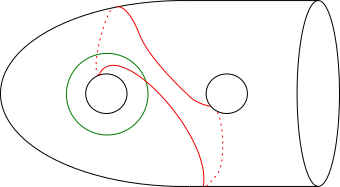}
\caption{Spine of $T_{11}$}
\end{subfigure}
\begin{subfigure}[b]{0.32\textwidth}
\includegraphics[scale=0.45]{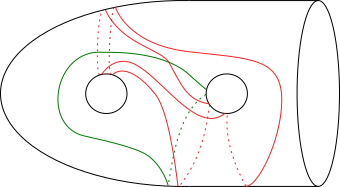}
\caption{Spine of $T_{12}$}
\end{subfigure}
\begin{subfigure}[b]{0.32\textwidth}
\includegraphics[scale=0.45]{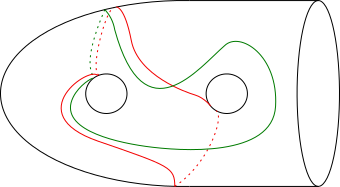}
\caption{Spine of $T_{13}$}
\end{subfigure}
\begin{subfigure}[b]{0.32\textwidth}
\includegraphics[scale=0.45]{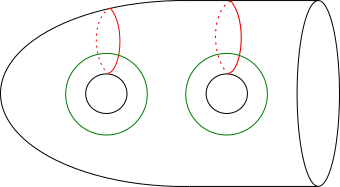}
\caption{Spines of $T_{s1}$ and $T_{s2}$}
\end{subfigure}
\begin{subfigure}[b]{0.32\textwidth}
\includegraphics[scale=0.45]{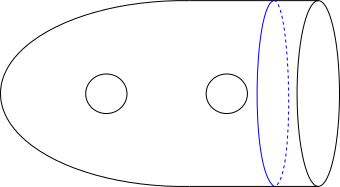}
\caption{The curve $\gamma_2$}
\end{subfigure}

\caption{Twists involved in the definition of $\psi$}
\label{twistfig}
\end{figure}

We now exhibit such a $\psi_1 \in \kp$, thus obtaining an application $\psi \in J_3$ whose Casson invariant is 1. A \emph{spine} of the Dehn twist around a BSCC $\gamma$ of genus 1 designates two simple closed curves (with geometric intersection equal to $1$) such that the boundary of a regular neighborghood of the union of the two curves is isotopic to $\gamma$. The map $\psi_1$ is defined as the product of some twists, the spines of which are drawn in Figure \ref{twistfig}: $$\psi_1 := T_1^{-1}\,T_2^{-1}\,T_3^{2}\,T_4^{2}\,T_5\,T_6^{-1}\,T_7^{-1}\,T_8\,T_9^{-1}\,T_{10}\,T_{11}^{-1}\,T_{12}^{-1}\,T_{13}\,T_{s1}^{7}\,T_{s2}^{2}$$ and we set $\psi := T_{\gamma_2}^{-3}\,\psi_1$. This definition of $\psi$ is rather complicated, involving 10 left Dehn twists and 17 right Dehn twists. Counting algebraically (positively for right Dehn twists and negatively for left Dehn twists), we needed $-3$ twists of genus $2$ and 10 twists of genus $1$ to create our element $\psi$. In fact, one needs so many twists to get $\psi \in J_3$ such that $\lambda(\psi)= 1$, as we shall explain in subsection \ref{submin}.
\label{themappsi}

\subsection{The map $\varphi$}
This section is dedicated to the proof of Theorem \ref{thA}. As $\lambda$ is a homomorphism on $J_4$, we ``simply'' need to find an element $\varphi \in J_4$ with $\lambda(\varphi) =1$. We build $\varphi$ by ``perturbing" the map $\psi$ that has been defined in the previous section. By using the Kawazumi-Kuno formula, we can compute the image of $\psi$ by $\tau_3$ (note here that we actually compute $\tau_{3, \Q}(\psi)$ and not $\tau_3(\psi)$, but the arrow from $D_3(H)$ to its rationalization is injective, hence we indeed get the value of $\tau_3(\psi)$). Indeed, by construction, $\psi$ belongs to $J_3$, hence $\tau_3^\theta(\psi)= \tau_3(\psi)$. Let us write formally $\psi = \Pi_{i=0}^{27} U_i$, where $U_i$ is either a right or left Dehn twist. Then Corollary \ref{KK3} gives $$\tau_3(\psi) = \sum \limits_{i = 0}^{27} \tau_3^\theta(U_i) = \sum \limits_{i = 0}^{27} L_5^\theta(U_i).$$ Here we use the fact that $\tau_2^{\theta} \oplus \tau_3^{\theta}$ is a homomorphism on $J_2$. For the proof of this fact, see \cite{masinf} where it is proven that, for any $k \geq 1$, $\bigoplus \limits_{i \in [k,2k[} \tau_i^\theta$ corresponds to the $k$-th Morita homomorphism on $J_k$ (which is a certain refinement of $\tau_k$).

After implementing this formula in a SageMath computer program (see Appendix \ref{appA}), we get
\begin{align*}
\tau_3(\psi) =& -\lfivetree{$a_2$}{$a_1$}{$a_1$}{$b_1$}{$a_1$}-\lfivetree{$a_2$}{$b_1$}{$a_1$}{$a_2$}{$a_1$}-\lfivetree{$b_2$}{$a_1$}{$a_1$}{$b_1$}{$a_1$}-\lfivetree{$b_2$}{$b_1$}{$a_1$}{$b_1$}{$a_1$}+\lfivetree{$b_2$}{$a_2$}{$a_1$}{$b_1$}{$a_1$}+\lfivetree{$b_2$}{$a_2$}{$a_1$}{$a_2$}{$a_1$}\\&+\lfivetree{$b_2$}{$a_2$}{$a_1$}{$b_2$}{$a_1$}+\lfivetree{$b_2$}{$a_2$}{$b_1$}{$b_2$}{$a_1$}+3*\lfivetree{$b_2$}{$a_2$}{$a_2$}{$b_1$}{$a_1$}+\lfivetree{$b_2$}{$a_2$}{$a_2$}{$a_2$}{$a_1$}+\lfivetree{$b_2$}{$a_2$}{$b_2$}{$b_1$}{$a_1$}\\&-\lfivetree{$b_2$}{$a_1$}{$a_2$}{$b_1$}{$a_1$}+\lfivetree{$b_2$}{$b_1$}{$a_2$}{$b_1$}{$a_1$}+\lfivetree{$b_2$}{$a_2$}{$b_2$}{$a_2$}{$a_1$}-\lfivetree{$b_2$}{$a_2$}{$b_2$}{$a_2$}{$b_1$},
\end{align*} or, in a more compact form:

$\tau_3(\psi) = \bfivetree{$b_1+a_2$}{$a_1$}{$a_1+a_2+b_2$}{$a_2$}{$a_1+b_2$} + \bfivetree{$a_2-a_1$}{$b_2$}{$a_1+b_1$}{$a_1$}{$b_1+b_2$}-\bfivetree{$a_2-a_1$}{$b_1$}{$b_2$}{$a_2$}{$b_1+b_2$}+\bfivetree{$b_2$}{$a_2$}{$2a_2-2a_1+b_2$}{$b_1$}{$a_1$}$.

\noindent Next we rewrite $\tau_3(\psi)$ as a sum of brackets:

\begin{align}
\tau_3(\psi) &= \bracket{3\ltritree{$a_1$}{$b_1$}{$a_2$} +\ltritree{$b_2$}{$a_2$}{$a_1$} +\ltritree{$a_1$}{$b_1$}{$b_2$}}{\ltree{$a_1$}{$b_1$}{$a_2$}{$b_2$}} \notag\\
  &  ~~~+\bracket{\ltritree{$b_1$}{$a_1$}{$a_2$-$b_2$}}{\ltree{$a_1$}{$a_2$}{$b_2$}{$a_1$}}+\bracket{\ltritree{$a_2$}{$b_2$}{$a_1$}}{\ltree{$a_2$}{$b_1$}{$a_1$}{$a_2$}} \label{eqbrack}\\
  &~~~+\bracket{\ltritree{$a_1$}{$b_1$}{$a_2$}}{\bracket{\ltritree{$a_1$}{$b_1$}{$b_2$}}{\ltritree{$a_1$-$b_1$}{$a_2$}{$b_2$}}} \notag\\
  &~~~+\bracket{\ltritree{$b_1$}{$a_2$}{$b_2$}}{\bracket{\ltritree{$a_1$}{$b_2$}{$a_2$}}{\ltritree{$a_1$}{$b_1$}{$b_2$}}}.\notag
 \end{align}
 
\noindent Recall that the trees appearing in the above formulas  define derivations through the expansion map $\eta$, which is a Lie homomorphism. The trees of degree $2$ inside the above brackets actually are in $\tau_2(\K)$. To see this, one can simply observe that they are in the kernel of the ``antisymmetric'' trace that has been defined in \cite{faes} to characterize $\tau_2(\K)$. We can also show this directly:

\begin{lemma}
The trees $\ltree{$a_1$}{$b_1$}{$a_2$}{$b_2$}$ , $\ltree{$a_1$}{$a_2$}{$b_2$}{$a_1$}$ and $\ltree{$a_2$}{$b_1$}{$a_1$}{$a_2$}$ are in $\tau_2(\K)$.
\label{lemtrees}
\end{lemma}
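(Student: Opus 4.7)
My plan is to exhibit each tree as $\tau_{2}(\varphi)$ for an explicit $\varphi\in\K$, combining Lemma~\ref{morformula} with multilinearity of the leg labels.

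I would first establish two preliminary facts. \emph{(A)} For every $u,v\in H$, the symmetric tree $u\odot v$ lies in $\tau_{2}(\kp)$: when $(u,v)$ is a symplectic pair, realize it as the basis of an embedded genus-$1$ subsurface, so that by Lemma~\ref{morformula} the $\tau_{2}$-image of the Dehn twist along its boundary equals $u\odot v$; the general case follows by bilinearity of $\odot$ together with a polarization argument (writing $v=(v+\lambda u_{0})-\lambda u_{0}$ for a suitable $u_{0}$). \emph{(B)} For any symplectic basis $(u_{1},v_{1},u_{2},v_{2})$ of $H$, the H-tree $\ltree{$u_{1}$}{$v_{1}$}{$u_{2}$}{$v_{2}$}$ lies in $\tau_{2}(\K)$: applying Lemma~\ref{morformula} to the (essentially unique) genus-$2$ BSCC $\gamma_{2}\simeq\partial\Sigma$ of $\Sigma_{2,1}$ with respect to that basis yields
\[
\ltree{$u_{1}$}{$v_{1}$}{$u_{2}$}{$v_{2}$} \;=\; \tau_{2}(T_{\gamma_{2}})\;-\;u_{1}\odot v_{1}\;-\;u_{2}\odot v_{2}\;\in\;\tau_{2}(\kpp)+\tau_{2}(\kp)\;\subset\;\tau_{2}(\K).
\]

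The first target tree $\ltree{$a_{1}$}{$b_{1}$}{$a_{2}$}{$b_{2}$}$ is already of the form in (B), for the standard symplectic basis, so the conclusion for it is immediate. For the remaining two trees, whose four labels do not form a symplectic basis, I would use the multilinearity of each leg to rewrite them as integer combinations of symmetric trees (handled by (A)) and H-trees labeled by symplectic bases (handled by (B)). A typical move is a substitution of the form $a_{2}=(a_{2}+b_{1})-b_{1}$, which replaces a non-symplectic leg pair by a symplectic one plus a correction term to be treated in the same way; after finitely many such steps one should be left with symmetric trees and H-trees coming from genuine symplectic bases.

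The main obstacle is the combinatorial bookkeeping of this last step. Although the subgroup of $D_{2}(H)$ generated by symmetric trees and symplectic-basis H-trees is rather large, finding for each of the two remaining trees an explicit expansion whose every summand is recognizably of one of these two types, and checking that the recursive substitution terminates, requires an explicit case analysis. This is routine but delicate, and is where the bulk of the work is.
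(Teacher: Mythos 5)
Your handling of the first tree is exactly the paper's argument ($\ltree{$a_1$}{$b_1$}{$a_2$}{$b_2$}=\tau_2(T_{\gamma_2})-a_1\odot b_1-a_2\odot b_2$, with the two $\odot$-terms realized by genus-$1$ BSCC twists), and that part is complete. The problem is with the other two trees. Your preliminary fact (A) --- that $u\odot v\in\tau_2(\kp)$ for \emph{all} $u,v\in H$ --- is not established by the argument you sketch, because $\odot$ is \emph{not} bilinear: each of $u$ and $v$ labels two legs of the symmetric tree, so $(u,v)\mapsto u\odot v$ is quadratic in each slot, and its polarization defect is precisely an H-shaped tree, e.g. $a\odot(b+c)-a\odot b-a\odot c=\ltree{$a$}{$b$}{$a$}{$c$}$. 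Hence the substitution $v=(v+\lambda u_0)-\lambda u_0$ reintroduces H-trees of exactly the type you are trying to eliminate, and the proposed recursion does not visibly terminate; the symmetrized version only yields $2(u\odot v)\in\tau_2(\kp)$ when $\omega(u,v)=0$, which does not give $u\odot v$ itself. The paper only ever uses (and only ever proves) that $u\odot v\in\tau_2(\kp)$ when $\omega(u,v)=\pm 1$. On top of this, you explicitly defer the decomposition of the two remaining trees into (A)- and (B)-type summands; that decomposition is the entire content of the hard half of the lemma, so as written the proof is incomplete.

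The paper closes this with two observations you are missing. First, $\tau_2(\K)$ is an $\sph$-submodule and the two remaining trees lie in a single $\sph$-orbit, so it suffices to treat one of them. Second, there is an exact second-difference identity in which \emph{only} symplectic pairs occur:
\[
\ltree{$a_2$}{$b_1$}{$a_1$}{$a_2$} \;=\; a_1\odot b_1 \;-\; a_1\odot(b_1+a_2)\;-\;(a_1+a_2)\odot b_1\;+\;(a_1+a_2)\odot(b_1+a_2),
\]
where each term $u\odot v$ on the right satisfies $\omega(u,v)=1$ and hence lies in $\tau_2(\kp)$ by Lemma~\ref{morformula}. (In general $\ltree{$c$}{$b$}{$a$}{$c$}=a\odot b-a\odot(b+c)-(a+c)\odot b+(a+c)\odot(b+c)$, which applies directly to both remaining trees since each has a repeated label.) With this identity your step (A) in its general form is never needed; without it, or some equally explicit decomposition avoiding non-symplectic $\odot$-terms, the argument has a genuine gap.
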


\begin{proof}
By the formula in Lemma \ref{morformula} we deduce that $$\ltree{$a_1$}{$b_1$}{$a_2$}{$b_2$} = \tau_2(T_{\gamma_2}T_{s1}^{-1}T_{s2}^{-1}) $$
\noindent It is easily seen that the two other trees are in the same orbit under the action of $\sph$. Hence, it is enough to show that $\ltree{$a_2$}{$b_1$}{$a_1$}{$a_2$}$ belongs to $\tau_2(\K)$. For this, we note that $$ \ltree{$a_2$}{$b_1$}{$a_1$}{$a_2$} = a_1 \odot b_1 - a_1 \odot (b_1 + a_2) - (a_1 +a_2) \odot b_1 +  (a_1 +a_2) \odot (b_1+a_2).$$
Any element of the form $u \odot v$ with $\omega(u,v)=1$ being in $\tau_2(\K)$, we conclude that $ \ltree{$a_2$}{$b_1$}{$a_1$}{$a_2$}$ is indeed an element of $\tau_2(\K)$.
\end{proof}

By Lemma \ref{lemtrees}, we get that $\tau_3(\psi) \in \tau_3([\K, \I])$. Indeed, Johnson has shown that $\tau_1$ is onto $D_1(H)$ \cite{joh80}, hence any tree of degree $1$ is the image by $\tau_1$ of an element in the Torelli group. Pick any element $\psi_2 \in [\K, \I]$, such that $\tau_3(\psi) + \tau_3(\psi_2) = 0$, and define $\varphi := \psi \psi_2 \in J_3$, so that $\varphi$ belongs to $\operatorname{Ker}(\tau_3) = J_4$. Since $d : \K \rightarrow \Z$ is invariant under the conjugacy action of $\M$ (see \cite{mor91}), we have $d([\K, \M]) = 0$, so that $d(\psi_2) =0$. We get $$\lambda(\varphi) = -\frac{1}{24}d(\varphi) = -\frac{1}{24}d(\psi) = 1,$$ which concludes the proof of Theorem \ref{thA}.

\begin{rem}
Recall that $J_4$ contains $[J_1,J_3]= [\I,J_3]$ and $[J_2,J_2]= [\K,\K]$ as subgroups. Our map $\varphi$ seems to be the first explicit example of an element of $J_4$ which is not in $[\mathcal{I}, J_3]$ nor in $[\K, \K]$ since it is not in  $[\K, \M]$ (and none of its powers is, actually). By ``explicit'', we mean that we are able to decompose the map $\varphi = \psi\psi_2$ as a product of Dehn twists. Specifically, one can build $\psi_2$ with $22$ twists. To do so, we use the decomposition of $\tau_3(\psi)= -\tau_3(\psi_2)$ in brackets given by equation \eqref{eqbrack} and we use the following fact: if $s,t \in \M$ and $t$ is a product of $k \geq 1$ twists, then $[s,t]$ can be written as a product of $2k$ twists. As $\psi$ is a product of $27$ twists, we deduce that $\varphi$ can be given as a product of $49$ twists. Nozaki, Sato and Suzuki obtained in \cite[Corollary 1.6]{nss}, in the case of a closed surface and for $g \geq 6$, the existence of an element of $J_4$, which was not in $[\K, \K]$: this element has a power in $[\K, \K]$, though.
\end{rem}

\begin{rem}
Note also that $\varphi$ does not belong to $[\I,\I]$ neither. Indeed the fact that $\lambda_j(\phi)=1$ implies that the Birman-Craggs homomorphism (see \cite{bircra,johquad}) does not vanish on $\varphi$.
\end{rem}
\label{subseccomp}
\subsection{Complexity of the computation}

One could wonder whether it is possible to build elements similar to $\psi \in J_3$ and $\varphi \in J_4$ using fewer twists. In this subsection we explain why so many Dehn twists have been necessary (see Proposition~\ref{complexity} below). 

Recall that $\kp$ and $\kp'$ are the subgroups of $\K$ generated, respectively, by twists around BSCC of genus 1 and 2. We will first give a description of the quotient $\tau_2(\K)/\tau_2(\kp)$. Here we suppose that the genus $g$ of $\Sigma$ is at least $2$, so that, by a result of Johnson \cite{joh2}, the group $\K$ is normally generated by any BSCC map of genus $1$ and any BSCC map of genus~$2$. This implies, by Lemma \ref{morformula}, that $\tau_2(\K)$ is generated as a $\sph$-module by $\frac{1}{2} \ltree{$a_1$}{$b_1$}{$a_1$}{$b_1$}$ and $\ltree{$a_1$}{$b_1$}{$a_2$}{$b_2$}$. It is also clear that $\tau_2(\kp)$ is the $\sph$-submodule of $D_2(H)$ generated by $\frac{1}{2} \ltree{$a_1$}{$b_1$}{$a_1$}{$b_1$}$. Equation \eqref{calculs1} implies that $3\ltree{$a_1$}{$b_1$}{$a_2$}{$b_2$} \in \tau_2(\kp)$, hence we get the following:

\begin{coroll}
For any $g\geq 2$, the quotient $\tau_2(\mathcal{K})/ \tau_2(\mathcal{K'})$ is a $\mathbb{Z}_3$-module generated as a $\sph$-module by the class of $\ltree{$a_1$}{$b_1$}{$a_2$}{$b_2$}$.
\end{coroll}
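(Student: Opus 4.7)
The plan is to read the corollary off the three facts gathered in the paragraph immediately preceding it. First, under the hypothesis $g \geq 2$, Johnson's result that $\K$ is normally generated by any genus-$1$ and any genus-$2$ BSCC map, combined with Lemma~\ref{morformula}, identifies $\tau_2(\K)$ as the $\sph$-submodule of $D_2(H)$ generated by the two classes $\tfrac{1}{2}\ltree{$a_1$}{$b_1$}{$a_1$}{$b_1$}$ and $\ltree{$a_1$}{$b_1$}{$a_2$}{$b_2$}$, while $\tau_2(\kp)$ is the $\sph$-submodule generated only by the symmetric one. Passing to the quotient therefore kills the first generator outright, so that $\tau_2(\K)/\tau_2(\kp)$ is already generated as an $\sph$-module by the single class $\big[\ltree{$a_1$}{$b_1$}{$a_2$}{$b_2$}\big]$.

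For the $\mathbb{Z}_3$-module claim, I would invoke equation~\eqref{calculs1}, which writes $3\,\tau_2(T_{\gamma_2})$ as an explicit $\sph$-combination of elements of the form $u \odot v$ with $\omega(u,v)=1$; each such element is $\tau_2$ of a genus-$1$ BSCC map by Lemma~\ref{morformula}, hence lies in $\tau_2(\kp)$. On the other hand, Lemma~\ref{morformula} applied to the genus-$2$ BSCC $\gamma_2$ gives $\tau_2(T_{\gamma_2}) = \tfrac{1}{2}\ltree{$a_1$}{$b_1$}{$a_1$}{$b_1$} + \ltree{$a_1$}{$b_1$}{$a_2$}{$b_2$} + \tfrac{1}{2}\ltree{$a_2$}{$b_2$}{$a_2$}{$b_2$}$, and the two symmetric summands are $\sph$-translates of $\tfrac{1}{2}\ltree{$a_1$}{$b_1$}{$a_1$}{$b_1$}$, hence already belong to $\tau_2(\kp)$. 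Subtracting three copies of these symmetric terms from $3\,\tau_2(T_{\gamma_2}) \in \tau_2(\kp)$ then yields $3\,\ltree{$a_1$}{$b_1$}{$a_2$}{$b_2$} \in \tau_2(\kp)$, which is exactly the desired $3$-torsion property.

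Since every ingredient has been assembled in the text above the corollary, no real obstacle remains; the proof is essentially a synthesis. I note, however, that the statement is silent on whether the generator has order exactly $3$ rather than $1$ in the quotient. Establishing non-triviality would require a separate argument, for instance by pairing with the antisymmetric trace map of \cite{faes} recalled in the proof of Lemma~\ref{lemtrees}, but this refinement is not needed for the corollary as phrased.
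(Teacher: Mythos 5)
Your proposal is correct and follows essentially the same route as the paper, which likewise deduces the corollary directly from Johnson's normal generation of $\K$ by genus-$1$ and genus-$2$ BSCC maps, Lemma~\ref{morformula}, and the observation that every $\odot$-term on the right of equation~\eqref{calculs1} has symplectic pairing $1$ and hence lies in $\tau_2(\kp)$, giving $3\,\ltree{$a_1$}{$b_1$}{$a_2$}{$b_2$}\in\tau_2(\kp)$. Your closing remark is also accurate: the corollary does not assert that the generator is nonzero, and the paper only establishes that later, via the isomorphism with $\Lambda^4(H/3H)$ in Proposition~\ref{tau2kp}.
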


\noindent From this, we get a precise description of the quotient $\tau_2(\K)/\tau_2(\kp)$.

\begin{propo}
For any $g \geq 2$, we have the following $\sph$-module isomorphisms: $$ \frac{D_2(H)}{\tau_2(\kp)}\otimes \mathbb{Z}_3 \simeq \frac{\tau_2(\mathcal{K})}{\tau_2(\kp)} \simeq \Lambda^4(H/3H).$$
\label{tau2kp}
\end{propo}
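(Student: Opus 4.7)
The plan is to construct a single $\sph$-equivariant surjection
$$\Phi : D_2(H) \twoheadrightarrow \Lambda^4(H/3H)$$
whose kernel is exactly $\tau_2(\kp) + 3\, D_2(H)$. Once this is established, both isomorphisms of the statement follow at once: tensoring $D_2(H)/\tau_2(\kp)$ with $\mathbb{Z}_3$ gives the left-hand isomorphism, while the restriction of $\Phi$ to $\tau_2(\K)$ yields the right-hand one, using that $\tau_2(\K)/\tau_2(\kp)$ is already annihilated by $3$ (by the preceding corollary, itself forced by equation~\eqref{calculs1}).

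To define $\Phi$, I would work on generators of $D_2(H)$: send an H-tree $\ltree{x}{y}{z}{w}$ to the class of $x\wedge y\wedge z\wedge w$ in $\Lambda^4(H/3H)$, and send each symmetric generator $u \odot v \in D_2(H)$ to $0$. Well-definedness on $\eta_2(\mathcal{A}^t_2(H))$ reduces to the $AS$ and $IHX$ relations: $AS$ matches the antisymmetry of the wedge product, and $IHX$ collapses in $\Lambda^4$ since wedging is totally antisymmetric. The inclusion $\tau_2(\kp) \subseteq \ker \Phi$ is immediate from Lemma~\ref{morformula} applied to genus-$1$ BSCC twists: the corresponding images are exactly of the form $u\odot v$ with $\omega(u,v)=1$, hence killed by $\Phi$. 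The factor $3\,D_2(H)$ enters the kernel because equation~\eqref{calculs1} expresses $3\cdot \ltree{a_1}{b_1}{a_2}{b_2}$ as a $\Z$-linear combination of $\sph$-translates of $a_1 \odot b_1$, all of which belong to $\tau_2(\kp)$.

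The main obstacle will be the reverse inclusion $\ker \Phi \subseteq \tau_2(\kp) + 3\,D_2(H)$. This splits into two sub-tasks. First, I must show that \emph{every} symmetric generator $u \odot v$ of $D_2(H)$, and not only those with $\omega(u,v)=1$, lies in $\tau_2(\kp) + 3\,D_2(H)$. For this I would use the bilinearity identity
$$(u+w) \odot v \;=\; u \odot v + w \odot v + \eta_{2}\bigl(\ltree{u}{v}{w}{v}\bigr),$$
together with symplectic transvections acting on the generator $a_1 \odot b_1$, in order to reach every integer symmetric element modulo $\eta_2(\mathcal{A}^t_2(H))$. Second, on the H-tree side I must check that the assignment $\ltree{x}{y}{z}{w} \mapsto x\wedge y\wedge z\wedge w$ induces an isomorphism $\mathcal{A}^t_2(H) \simeq \Lambda^4 H$; this is a standard rank computation, verified by fixing an ordered basis of $H$ and using $AS$/$IHX$ to normalize every H-tree to a pure wedge of increasing labels. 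Combining these two sub-tasks yields $\Phi$ with the announced kernel, and the proposition follows by restriction and base change. The delicate step is the first sub-task, since $\tau_2(\kp)$ a priori only contains symmetric pieces $u \odot v$ with $\omega(u,v)=1$, and lifting this to cover all integer symmetric elements modulo trees is where equation~\eqref{calculs1} and the $\sph$-action must be exploited carefully.
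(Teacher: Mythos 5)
Your overall strategy is the paper's: your $\Phi$ is the map $\kappa$ of the paper (kill the symmetric generators, send a labelled tree to the corresponding wedge), and you correctly isolate as the delicate point the fact that \emph{every} $u\odot v$, not only those with $\omega(u,v)=1$, must lie in $\tau_2(\kp)+3D_2(H)$; the paper proves exactly this by induction on $\omega(u,v)$, using your bilinearity identity together with Lemma \ref{morformula}. Two remarks before the main issue. First, for the isomorphism $\frac{D_2(H)}{\tau_2(\kp)}\otimes\Z_3\simeq\frac{\tau_2(\K)}{\tau_2(\kp)}$ it is not enough that $\tau_2(\K)/\tau_2(\kp)$ is $3$-torsion: you also need Morita's result that $D_2(H)/\tau_2(\K)$ is $2$-torsion, both to see that the inclusion $\tau_2(\K)\subset D_2(H)$ becomes an isomorphism after $\otimes\,\Z_3$ and to compute $\tau_2(\K)\cap\bigl(\tau_2(\kp)+3D_2(H)\bigr)$. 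Second, be careful that the $IHX$ relation is sent by $\Phi$ to $3\,(x\wedge y\wedge z\wedge w)$, which vanishes only because the target is $\Lambda^4(H/3H)$; it is not killed by antisymmetry of the wedge alone.

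The genuine gap is in your second sub-task. The assignment $\ltree{$x$}{$y$}{$z$}{$w$}\mapsto x\wedge y\wedge z\wedge w$ does \emph{not} induce an isomorphism $\mathcal{A}^t_2(H)\simeq\Lambda^4 H$: it is not even well defined over $\Z$ (by the $IHX$ computation above), and it is far from injective, since for instance $\ltree{$a$}{$b$}{$a$}{$b$}=2\,(a\odot b)$ is a nonzero element of $\mathcal{A}^t_2(H)$ mapping to $0$. So "normalizing every $H$-tree to a pure wedge of increasing labels" cannot succeed, and the inclusion $\ker\Phi\subseteq\tau_2(\kp)+3D_2(H)$ does not follow from your two sub-tasks. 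What is actually needed --- and what the paper supplies when checking that the inverse map $\nu$ is well defined --- is that the tree-side kernel elements of $\Phi$ already lie in $\tau_2(\kp)+3D_2(H)$. This you can get from the tools you already have: once all $u\odot v$ are known to be in $\tau_2(\kp)+3D_2(H)$, your bilinearity identity places every repeated-label tree $\ltree{$a$}{$b$}{$c$}{$b$}$ there as well, and polarizing that tree once more in $b$ gives $\ltree{$a$}{$b$}{$c$}{$d$}+\ltree{$a$}{$d$}{$c$}{$b$}\equiv 0$, which by $IHX$ equals $2\ltree{$a$}{$b$}{$c$}{$d$}-\ltree{$a$}{$c$}{$b$}{$d$}$, i.e.\ modulo $3$ precisely the missing antisymmetry relation $\ltree{$a$}{$b$}{$c$}{$d$}+\ltree{$a$}{$c$}{$b$}{$d$}\equiv 0$. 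With that relation (and surjectivity, which is clear from the genus-$2$ trees), the quotient is indeed $\Lambda^4(H/3H)$; without it, your argument stalls at a quotient that is a priori larger.
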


\begin{proof}
We denote by $Q$ the quotient $\frac{D_2(H)}{\tau_2(\kp)}\otimes \mathbb{Z}_3$.
The first isomorphism is a direct consequence of the fact that $\tau_2(\mathcal{K})$ is of order 2 in $D_2(H)$, and $2$ is invertible in $\mathbb{Z}_3$. Indeed, the map from $Q$ to $\frac{\tau_2(\mathcal{K})}{\tau_2(\kp)}$ sending an element $x$ to the class of $4x$ in $\frac{\tau_2(\mathcal{K})}{\tau_2(\kp)}$ is well-defined (because $D_2(H)/\tau_2(\K)$ is a 2-torsion module \cite[Prop 1.2]{mor}), and has for inverse the map induced by the inclusion of $\tau_2(\K)$ in $D_2(H)$.

We then use the presentation of $D_2(H)$ from \cite[Prop. 2.1]{faes} to define a homomorphism $\kappa : \frac{D_2(H)}{\tau_2(\kp)}\otimes \mathbb{Z}_3 \rightarrow \Lambda^4(H/3H)$, by putting $\kappa(a \odot b) := 0$, for any $a,b \in H$, and $\kappa(\ltree{$a$}{$b$}{$c$}{$d$}) := a \wedge b \wedge c \wedge d \in \Lambda^4(H/3H)$ for any $a,b,c,d \in H$. It is straightforward to check that all the relations in $D_2(H)$, are sent to $0$, except maybe the IHX relation:
\begin{align*}
\kappa(\ltree{$a$}{$b$}{$c$}{$d$} - \ltree{$a$}{$c$}{$b$}{$d$} - \ltree{$a$}{$d$}{$c$}{$b$})&= a\wedge b\wedge c \wedge d  - a \wedge c \wedge b \wedge d - a \wedge d \wedge c \wedge b \\
&= 3 (a \wedge b \wedge c \wedge d) \\
&= 0 \in \Lambda^4(H/3H).
\end{align*}
The map $\kappa$ is then a well-defined $\operatorname{Sp}(H)$-equivariant homomorphism.

Reciprocally, we show that the map $\nu$, sending $a \wedge b \wedge c \wedge d$ to the class of $\ltree{$a$}{$b$}{$c$}{$d$}$ in $Q$ is also well-defined. It will follow that $\kappa$ is an isomorphism with $\kappa^{-1} = \nu$. To prove that $\nu$ is well-defined, we need to show that the relation $a \wedge b \wedge c \wedge d + a \wedge c \wedge b \wedge d = 0$ is sent to $0 \in Q$ (the other relations in $\Lambda^4(H/3H)$ are easily verified). Hence we want to show that $\ltree{$a$}{$b$}{$c$}{$d$} + \ltree{$a$}{$c$}{$b$}{$d$}$ is equal to $0$ in $Q$.

We first show that for any $a,b$ in $H$, $a \odot b$ is equal to $0$ in $Q$. The equality $a \odot b = b \odot a$, for any $a, b \in H$ allows us to suppose that we always have $\omega(a,b) \geq 0$. We proceed by induction on $\omega(a,b) \in \mathbb{N}$. If $\omega(a,b) = 1$, we know that there exists a subsurface $F$ of genus $1$ in $\Sigma$, with one boudary component $\gamma$, such that there are two simple closed curves $\alpha$ and $\beta$, inducing $a$ and $b$ in homology, intersecting once and inducing a symplectic basis of $F$. We then have by Lemma \ref{morformula} that $\tau_2(T_\gamma) = a \odot b$, hence $a \odot b = 0 \in Q$ . If $c \in H$ is such that  $\omega(a,c)= 0$, we have $\omega(a,b+c) = \omega(a,b-c)=1$, and:
\begin{align*}
 a \odot (b+c) - a \odot b - a \odot c &= \ltree{$a$}{$b$}{$a$}{$c$} \\ 
 a \odot (b-c) - a \odot b - a \odot c &= -\ltree{$a$}{$b$}{$a$}{$c$}
 \end{align*}
which implies that $2 a \odot c$, and hence $a \odot c$, is trivial in $Q$. Now, if $\omega(a,b)=k +1$ for some $k \geq 1$, either $a$ is not primitive in $H$ (and we can reduce the value of $k$), either there exists $b_2 \in H$ such that $\omega(a,b_2)=1$, and we can write $b = b_1 + b_2$ with $\omega(a,b_1) = k$. Thus we have that $a \odot (b_1 +b_2) + a \odot (b_1 -b_2) = 2(a \odot b_1) + 2(a \odot b_2) \in Q$. By using the induction hypothesis for $k, k-1$ and $1$, we conclude that $a \odot b = a \odot (b_1 +b_2)$ is trivial in $Q$. 

Furthermore, for any $a, b,c,d \in H$, we have the following, where all the terms of the left hand-sides are trivial in~$Q$:
\begin{align*}
(a +c) \odot b - a \odot b - c \odot b &= \ltree{$a$}{$b$}{$c$}{$b$} \\
\ltree{$a$}{$b+d$}{$c$}{$b+d$} - \ltree{$a$}{$b$}{$c$}{$b$}  - \ltree{$a$}{$d$}{$c$}{$d$} &= \ltree{$a$}{$b$}{$c$}{$d$} + \ltree{$a$}{$d$}{$c$}{$b$} \\
&= 2\ltree{$a$}{$b$}{$c$}{$d$} -\ltree{$a$}{$c$}{$b$}{$d$}.
\end{align*}
We conclude, because we work modulo $3$, that $\ltree{$a$}{$b$}{$c$}{$d$} +\ltree{$a$}{$c$}{$b$}{$d$}$ is trivial in $Q$.
\end{proof}

The characterization given by Proposition \ref{tau2kp} might be helpful to build other elements $\psi \in J_3$ such that $d(\psi) =1$, making use of the fact that $d$ is trivial on $\kp$. Indeed, for any product of left and right Dehn twists of genus $2$, such that the algebraic number of twists is $-3$, and whose image by $\tau_2$ is in $\tau_2(\kp)$, we can always multiply it by an element of $\kp$ such that the product is in $J_3$ and has Casson invariant equal to $1$. 

Nevertheless, this will always involve a lot of Dehn twists, as we now explain. This is related to the properties of $d$, the core the Casson invariant, and another invariant $d'$ defined by Morita in $\cite{mor91}$. As we recalled, the map $d$ is the homomorphism from $\K$ to $\Z$ sending a twist of genus $h$ to $4h(h-1)$. Similarly, $d'$ can be defined as the homomorphism from $\K$ to $\Z$ sending a twist of genus $h$ to $h(2h+1)$. The particularity of the map $d'$ is that it factors through $\tau_2$:

\begin{equation}
d' := \bar{d'} \circ \tau_2
\label{eqdp}
\end{equation} 
where $\bar{d'}$ is defined on $D_2(H)$ by

\begin{align*}
\bar{d'}(a \odot b) &:= 3 \omega(a,b)^2 \\
\bar{d'}(\ltree{$a$}{$b$}{$c$}{$d$}) &:= 4\omega(a,b)\omega(c,d) - 2\omega(a,d)\omega(b,c) + 2 \omega(a,c)\omega(b,d).
\end{align*}

\noindent Morita \cite[Th. 5.4]{mor91} showed that the $\M$-equivariant homomorphisms from $\K$ to $\mathbb{Z}$ are rational linear combinations of $d$ and $d'$ (which are linearly independent): thus $H^{1}(\mathcal{K;\mathbb{Z}})^{\M}$ is free abelian of rank two and generated over the rational by $d$ and $d'$. In fact the following lemma can be obtained by direct computation.

\begin{lemma}
If $T_1$ is a twist of genus $1$ and $T_2$ is a twist of genus $2$, then:
\begin{align*}
\frac{d}{8}(T_2) &= \frac{4d' -5d}{12}(T_1) = 1 \\
\frac{d}{8}(T_1) &= \frac{4d' -5d}{12}(T_2) = 0.
\end{align*}
\label{lemddp}
\end{lemma}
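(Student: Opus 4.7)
The plan is a direct computation using the explicit values of $d$ and $d'$ on Dehn twists along bounding simple closed curves that have been recalled in the preceding discussion. Recall that for a BSCC of genus $h$ the core $d$ of the Casson invariant satisfies $d(T_\gamma) = 4h(h-1)$, while Morita's homomorphism $d'$ satisfies $d'(T_\gamma) = h(2h+1)$. So the only input needed beyond these two formulas is arithmetic.

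First I would tabulate the four values. For a genus~$1$ twist, $d(T_1) = 4\cdot 1 \cdot 0 = 0$ and $d'(T_1) = 1 \cdot 3 = 3$. For a genus~$2$ twist, $d(T_2) = 4 \cdot 2 \cdot 1 = 8$ and $d'(T_2) = 2 \cdot 5 = 10$. From these one reads off $\frac{d}{8}(T_1) = 0$ and $\frac{d}{8}(T_2) = 1$, which gives one of the two columns of the statement.

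For the other column, I would simply substitute into $\frac{4d' - 5d}{12}$: we get
\[
\frac{4d'-5d}{12}(T_1) \;=\; \frac{4\cdot 3 - 5 \cdot 0}{12} \;=\; \frac{12}{12} \;=\; 1, \qquad \frac{4d'-5d}{12}(T_2) \;=\; \frac{4\cdot 10 - 5 \cdot 8}{12} \;=\; \frac{0}{12} \;=\; 0.
\]
This completes the verification. There is no real obstacle: the only point worth flagging is that the divisibility ensuring $\frac{4d'-5d}{12}$ takes integer values on genus~$1$ and genus~$2$ twists is visible from the computation itself, and any potential subtlety is absorbed into the known fact, used on the previous page, that $d$ and $d'$ generate $H^1(\K;\Z)^\M$ rationally, so that $(d/8, (4d'-5d)/12)$ is the basis of the integral lattice dual to $(T_1, T_2)$ modulo the relevant normal closure.
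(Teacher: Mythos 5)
Your computation is correct and is exactly the ``direct computation'' the paper alludes to: substituting $d(T_h)=4h(h-1)$ and $d'(T_h)=h(2h+1)$ for $h=1,2$ into the two linear combinations. Nothing more is needed.
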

\noindent Therefore we obtain that the abelian group $H^{1}(\mathcal{K;\mathbb{Z}})^{\M}$ is freely generated by $\frac{4d'-5d}{12}$ and $\frac{d}{8}$. Furthermore, we have the following.

\begin{propo}
For any $g \geq 2$, $\mathcal{K}/[\mathcal{K}, \mathcal{M}]$ is free abelian of rank $2$ and canonically isomomorphic to its dual $H^{1}(\mathcal{K;\mathbb{Z}})^{\M}.$
\label{propKM}
\end{propo}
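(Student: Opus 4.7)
The plan is to combine the description of $H^1(\K;\Z)^\M$ already obtained with Johnson's normal generation theorem, and then use the natural evaluation pairing to identify $\K/[\K,\M]$ with the dual of $H^1(\K;\Z)^\M$.

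First, I would observe that $\K/[\K,\M]$ is automatically abelian since $[\K,\K] \subset [\K,\M]$. Thus the group is a finitely generated abelian group as soon as it is finitely generated. By Johnson's normal generation theorem \cite{joh2}, for $g \geq 2$ the group $\K$ is normally generated in $\M$ by any single BSCC map $T_1$ of genus $1$ together with any single BSCC map $T_2$ of genus $2$. Passing to $\K/[\K,\M]$ identifies all $\M$-conjugates, so this quotient is generated as an abelian group by the two classes $[T_1]$ and $[T_2]$.

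Next, I would invoke the tautological evaluation pairing
\[ \K/[\K,\M] \;\longrightarrow\; \operatorname{Hom}\bigl(H^1(\K;\Z)^\M,\,\Z\bigr),\qquad [x] \longmapsto \bigl(f \mapsto f(x)\bigr). \]
By the discussion preceding Lemma \ref{lemddp}, $H^1(\K;\Z)^\M$ is free abelian of rank $2$, with basis $\tfrac{d}{8}$ and $\tfrac{4d'-5d}{12}$; in particular the target of the evaluation map is isomorphic to $\Z^2$. Lemma \ref{lemddp} computes the values of these two generators on the Dehn twists $T_1$ and $T_2$ and shows that $[T_1]$ and $[T_2]$ are sent to the dual basis of this $\Z^2$. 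Hence the evaluation map is surjective and its image of the generating set $\{[T_1],[T_2]\}$ is a $\Z$-basis of $\Z^2$.

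Finally, I would conclude by a standard elementary argument: a finitely generated abelian group generated by two elements which surjects onto $\Z^2$ with those generators mapping to a basis must itself be free of rank $2$, the map being an isomorphism. This yields simultaneously that $\K/[\K,\M] \cong \Z^2$ and that the evaluation pairing is an isomorphism onto $H^1(\K;\Z)^\M$, which is moreover canonical as no auxiliary choices were made. The main conceptual obstacle has in fact already been removed upstream, namely the non-trivial identification of $H^1(\K;\Z)^\M$ as a rank-$2$ free abelian group generated integrally by $\tfrac{d}{8}$ and $\tfrac{4d'-5d}{12}$ (rather than merely rationally by $d$ and $d'$); once that integrality is in hand, combining it with Johnson's normal generation is what drives the proof.
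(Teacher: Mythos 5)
Your argument is correct and rests on exactly the same ingredients as the paper's proof: Johnson's normal generation of $\K$ by a genus-$1$ and a genus-$2$ BSCC twist, the integral basis $\tfrac{d}{8}$, $\tfrac{4d'-5d}{12}$ of $H^{1}(\K;\mathbb{Z})^{\M}$, and the dual-basis evaluations of Lemma \ref{lemddp}. The only (cosmetic) difference is the direction of the map: the paper defines $f \mapsto T_1^{f(T_1)}T_2^{f(T_2)}$ from $H^{1}(\K;\mathbb{Z})^{\M}$ to $\K/[\K,\M]$ and checks it is surjective and injective, whereas you use the evaluation map into $\operatorname{Hom}(H^{1}(\K;\mathbb{Z})^{\M},\mathbb{Z})$, which establishes the same perfect pairing.
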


\begin{proof}
Choose two twists of genus $1$ and $2$, denoted respectively $T_1$ and $T_2$. Then, the map from $H^{1}(\mathcal{K;\mathbb{Z}})^{\M}$ to $\K/[\K,\M]$ defined by sending $f$ to $T_1^{f(T_1)}T_2^{f(T_2)}$ is surjective and canonical, as any two Dehn twists of same genus are always conjugated to each other. It is injective by Lemma \ref{lemddp}.
\end{proof}

\begin{rem}
Proposition \ref{propKM} implies that to compute the algebraic number of Dehn twists of genus 1 and 2 involved in an element of $\K$, it is enough to know its values under $\tau_2$ and $d$. Also, for any $\phi \in \operatorname{Ker}(d)$, we see that there is a $k \in \mathbb{Z}$ such that $d'(\phi) = 3k$. Then we get that for any BSCC $\gamma$ of genus 1, $\phi (T_\gamma)^{-k}$ is in $\operatorname{Ker}(d) \cap \operatorname{Ker}(d') = [\K,\M]$. This proves that in genus $g \geq 2$, $\operatorname{Ker}(d) = [\K,\M]\kp$.
\end{rem}

To get an element $\psi \in J_3$ such that $\lambda(\psi) = 1$, we need to have $d(\psi)= -24$, and $d'(\psi)=0$ (as $d'$ factorizes by $\tau_2$). This implies that $\frac{4d' -5d}{12}(\psi) = 10$ and $\frac{d}{8}(\psi)= -3$. Thus, by the previous computations, the algebraic numbers of Dehn twists of genus $1$ and $2$ in $\psi$ are respectively $10$ and $-3$. Thus we obtain the following:

\begin{propo}
In genus $g \geq 2$, any element in $J_3$ whose Casson invariant is $1$ is the composition of BSCC maps of genus $1$ and $2$ such that the algebraic number of BSCC maps of genus $1$ is $10$ and the algebraic number of BSCC maps of genus $2$ is $-3$.
\label{complexity}
\end{propo}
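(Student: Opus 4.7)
The plan is to exploit Proposition \ref{propKM}, which identifies $\K/[\K,\M]$ with the free abelian group of rank $2$ generated by the classes of any choice of genus-$1$ twist $T_1$ and genus-$2$ twist $T_2$. Under this identification, the algebraic number of genus-$h$ twists in any decomposition of $\psi \in \K$ as a product of BSCC maps of genus $1$ and $2$ is just the $h$-th coordinate of the class of $\psi$ in $\K/[\K,\M]$, hence intrinsic; moreover, by Lemma \ref{lemddp} the dual basis of $H^1(\K;\Z)^{\M}$ is given by $\frac{4d'-5d}{12}$ (reading off the genus-$1$ count) and $\frac{d}{8}$ (reading off the genus-$2$ count). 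So proving Proposition \ref{complexity} reduces to computing $d(\psi)$ and $d'(\psi)$ for an arbitrary $\psi \in J_3$ with $\lambda(\psi)=1$.

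For $d(\psi)$: since $\psi \in J_3 = \operatorname{Ker}(\tau_2)$ and the term $q_j$ in Morita's decomposition \eqref{eqmorita} factorises through $\tau_2$, we have $q_j(\psi) = 0$. Substituting into \eqref{eqmorita} gives $-1 = -\lambda(\psi) = \frac{1}{24}d(\psi)$, hence $d(\psi) = -24$. For $d'(\psi)$: equation \eqref{eqdp} says $d' = \bar{d'} \circ \tau_2$, so $\psi \in \operatorname{Ker}(\tau_2)$ forces $d'(\psi) = 0$.

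Plugging these values into the dual basis yields the algebraic number of genus-$2$ twists as $\frac{d}{8}(\psi) = -3$, and the algebraic number of genus-$1$ twists as $\frac{4d'-5d}{12}(\psi) = \frac{0 + 120}{12} = 10$, which is exactly the statement of the proposition.

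The only subtle point is the well-definedness of the ``algebraic number of twists of each genus'', that is, its independence of the particular decomposition of $\psi$ as a product of BSCC maps of genus $1$ and $2$; but this is precisely the content of Proposition \ref{propKM} combined with the fact that all BSCC maps of the same genus are conjugate in $\M$ and hence represent the same class in $\K/[\K,\M]$. Once that has been set up, the proof is a one-line calculation from the values $d(\psi) = -24$ and $d'(\psi) = 0$, with no further obstacle.
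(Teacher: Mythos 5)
Your proof is correct and follows exactly the paper's argument: it derives $d(\psi)=-24$ from Morita's formula \eqref{eqmorita} together with the vanishing of $q_j$ on $J_3$, gets $d'(\psi)=0$ from \eqref{eqdp}, and then reads off the algebraic twist counts $10$ and $-3$ via Lemma \ref{lemddp} and Proposition \ref{propKM}. The remark on well-definedness of the twist counts is likewise the same point the paper makes in the remark following Proposition \ref{propKM}.
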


\label{submin}

\section{Triviality of the $J_4$-equivalence}
In this section, we prove Theorem \ref{thB}. We have shown in Section \ref{sec2} that there exists an element $\psi \in J_4$ whose Casson invariant is equal to $1$. Following \cite[Rem. 6.4]{masY3}, we explain how this implies the triviality of the $J_4$-equivalence relation on the set of homology 3-spheres.

We first recall the definition of the $Y_k$-equivalence and the $J_k$-equivalence relations, and refer to \cite[Section 2]{masY3} for more details. For a compact oriented 3-manifold $M$, a submanifold $S \subset \operatorname{int}(M)$ homeomorphic to $\Sigma_{g,1}$ for some $g \geq 1$, and any $\varphi$ in the mapping class group of $S$, we define the 3-manifold $M_{S, \varphi}$ by removing from $M$ a neighborhood $S \times [-1,1]$ of $S$, and regluing it twisting by the map $\varphi$:

 \[M_{S,\varphi} := (M \backslash \operatorname{int}(S \times[-1,1])) \cup_{\overline{\varphi}}(S \times[-1,1])  \] \noindent where $\overline{\varphi}$ is the map from $\partial (S \times [-1,1])$ to $M$ defined by $(Id \times (-1)) \cup (  \partial S \times Id) \cup  (\varphi \times 1)$.

Whenever the map $\varphi$ is in the Torelli group $J_1$ of $S$, we call the move from $M$ to $M_{S,\varphi}$ a \emph{Torelli surgery}. Torelli surgeries preserve the set $\mathcal{S}(3)$ of homeomorphism classes of homology 3-spheres.

\begin{definition}
The $Y_k$-equivalence and \textit{$J_k$-equivalence relations} are defined on $\mathcal{S}(3)$ as follows: \[ M \stackrel{Y_{k}}{\sim} M^{\prime} \Leftrightarrow \exists S \subset M, ~  \exists \varphi  \in \Gamma_{k} \mathcal{I}(S) \textit{ such that } M' \cong M_{S,\varphi}, \]
\[ M \stackrel{J_{k}}{\sim} M^{\prime} \Leftrightarrow \exists S \subset M, ~  \exists \varphi  \in J_k(S) \textit{ such that } M' \cong M_{S,\varphi}. \]
\end{definition}

These relations are equivalence relations, and $Y_k$-equivalence implies $J_k$-equivalence. It is known that $J_3$-equivalence is trivial among homology 3-spheres \cite{pit}. We now improve this result. We first sketch the proof of the following fact, announced by Habiro \cite{habclasper}:

\begin{theorem}[Habiro]
For any $M, M' \in \mathcal{S}(3)$, the following statements are equivalent:

\begin{align*}
(1) ~~ &  M \stackrel{Y_{3}}{\sim} M^{\prime} \\
(2)~~& M \stackrel{Y_{4}}{\sim} M^{\prime} \\
(3)~~&\lambda(M) = \lambda(M') \in \mathbb{Z}.
\end{align*}
\label{Y4lemma}
\end{theorem}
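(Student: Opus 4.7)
\emph{Proof plan.} The implication $(2)\Rightarrow(1)$ is immediate, since $\Gamma_{4}\mathcal{I}(S) \subset \Gamma_{3}\mathcal{I}(S)$ means every $Y_4$-surgery is a fortiori a $Y_3$-surgery.

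For $(1)\Rightarrow(3)$, the plan is to verify directly that $\lambda$ is a $Y_3$-equivalence invariant. By Johnson's inclusion $[\mathcal{I},\mathcal{I}]\subset\mathcal{K}$, one has $\Gamma_3\mathcal{I}=[\mathcal{I},[\mathcal{I},\mathcal{I}]]\subset[\mathcal{K},\mathcal{M}]$. On the other hand, Morita's decomposition \eqref{eqmorita} presents $\lambda_j$ as a homomorphism on $\mathcal{K}$; since the value $\lambda(M_{S,\varphi})$ does not depend on the choice of embedding of $S$, this homomorphism is $\mathcal{M}$-invariant and therefore vanishes on $[\mathcal{K},\mathcal{M}]$, hence on $\Gamma_3\mathcal{I}$. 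This realises, for $\lambda$, the Goussarov--Habiro principle that finite-type invariants of degree $\leq k-1$ are constant on $Y_k$-equivalence classes.

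The substantive implication is $(3)\Rightarrow(2)$, where the plan is to invoke Habiro's clasper calculus \cite{habclasper}. Connected sum endows $\mathcal{S}(3)/Y_k$ with an abelian group structure, and the associated graded pieces $Y_k\mathcal{S}(3)/Y_{k+1}\mathcal{S}(3)$ admit combinatorial descriptions as quotients of spaces of connected Jacobi diagrams. Two inputs from \cite{habclasper} are needed: the quotient $Y_2\mathcal{S}(3)/Y_3\mathcal{S}(3)$ is infinite cyclic and detected faithfully by the Casson invariant, while the quotient $Y_3\mathcal{S}(3)/Y_4\mathcal{S}(3)$ is trivial. Granted these, one concludes by telescoping: if $\lambda(M)=\lambda(M')$, then $\mu(M)=\mu(M')$ (because $\mu\equiv\lambda\pmod{2}$), so $M$ and $M'$ are $Y_2$-equivalent by Matveev's classification; the first input upgrades this to $Y_3$-equivalence, and the second to $Y_4$-equivalence.

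The hard part is the second input, namely the vanishing of $Y_3\mathcal{S}(3)/Y_4\mathcal{S}(3)$. The plan is to realise any $Y_3$-equivalence between homology spheres as a composition of surgeries along connected tree claspers of degree $3$, and to show, via Habiro's move calculus (AS and IHX relations together with the STU- and slide-type moves that hold modulo $Y_4$), that every such surgery on a homology sphere can be reduced to the trivial one modulo $Y_4$-equivalence. Combinatorially, this amounts to the vanishing of a certain abelian group presented by connected Jacobi diagrams of internal degree~$2$; the null-homology of leaves available on an integral homology sphere is the crucial ingredient that makes the reduction work.
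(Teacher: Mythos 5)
Your proposal is correct in substance and, for the only genuinely hard implication, takes the same route as the paper: both proofs reduce $(3)\Rightarrow(2)$ to Habiro's clasper-calculus computation of the graded quotients of the $Y$-filtration on $\mathcal{S}(3)$, namely $Y_2(\mathcal{S}(3))/Y_3\simeq\Z$ detected by $\lambda$ and $Y_3(\mathcal{S}(3))/Y_4=0$. Where you differ is in $(1)\Rightarrow(3)$: the paper gets it for free from the clasper-theoretic isomorphism $Y_2(\mathcal{S}(3))/Y_3\simeq\Z$, whereas you argue inside the mapping class group via $\Gamma_3\I\subset[\K,\M]$ and Morita's decomposition \eqref{eqmorita}; that is a legitimate alternative, but your justification of $\M$-invariance is too quick --- $\lambda_j$ on $\K$ is \emph{not} independent of the embedding $j$ (only the core $d$ is; $q_j$ depends on $j$), so you should instead kill the two summands separately: $d$ vanishes on $[\K,\M]$ by conjugation-invariance, and $q_j$ vanishes on $\Gamma_3\I\subset[J_2,J_1]\subset J_3=\ker\tau_2$ because it factors through $\tau_2$. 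Two smaller points: the classification of $Y_2$-equivalence by the Rochlin invariant is due to Habiro and Massuyeau (Matveev's theorem is the $Y_1$-statement), and in your sketch of the ``hard part'' the decisive reason that $Y_3(\mathcal{S}(3))/Y_4$ vanishes is a parity count --- after closing up the null-homologous leaves one lands in connected trivalent Jacobi diagrams with three vertices, and no trivalent graph has an odd number of vertices --- which your description of ``diagrams of internal degree $2$'' does not quite pin down. Since both you and the paper ultimately cite \cite{habclasper} for these computations rather than reproving them, none of this affects the validity of the argument.
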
 

\begin{proof}[Sketch of proof]
Habiro \cite[Section 8]{habclasper} studied the \emph{$Y$-filtration} on the monoid of homology cylinders (we refer to \cite[Sections 5 and 6]{mhsurvey} for a survey). A similar study was made by Goussarov \cite{gous}, with a	different vocabulary. Here, we need only to use the results of Habiro in the genus $0$ case, corresponding to the monoid of homology $3$-spheres (where the multiplication is the connected sum operation). In this case $\mathcal{S}(3)$ is filtered by $(Y_k(\mathcal{S}(3)))_{k\geq 1}$, where $Y_k(\mathcal{S}(3))$ is the submonoid consisting of homology 3-spheres $Y_k$-equivalent to $S^3$. For any $k \geq 1$, the quotient of $\mathcal{S}(3)$ by the $Y_k$-equivalence is a group and the quotient of $Y_k(\mathcal{S}(3))$ by the $Y_{k+1}$-equivalence is an abelian group. We consider the associated graded space $$\operatorname{Gr}^Y(\mathcal{S}(3)):= \bigoplus_{k \geq 1} Y_k(\mathcal{S}(3))/ Y_{k+1}.$$ By using the techniques of clasper surgery, Habiro was able to define (in degree greater than $2$) a surjective graded map from a certain space of Jacobi diagrams, namely trivalent graphs subject to the $AS$ and $IHX$ relations (see Figure \ref{jacobirelations}), to $\operatorname{Gr}^Y(\mathcal{S}(3))$. In degree $2$, there is only one (up to scalar) such Jacobi diagram, and in degree $3$, there are none. This implies that $Y_2(\mathcal{S}(3))/ Y_{3} \simeq \Z$ (the isomorphism being given by the Casson invariant), and $Y_3(\mathcal{S}(3))/ Y_{4} = 0$. We deduce that two homology $3$-spheres are $Y_4$-equivalent if and only if they are $Y_3$-equivalent, i.e. if and only if they have the same Casson invariant (see \cite{habclasper} or \cite{masY3}). 
\end{proof}

\begin{proof}[Proof of Theorem \ref{thB}]
Let $S$ be such that the standard Heegaard surface of genus $2$ in $S^3$ is obtained from $S$ by capping it with a disk. Let us pick $\varphi \in J_4(S)$ such that $\lambda(\varphi)=1$: the existence of such an element has been proved in Section \ref{sec2}. We have by definition that $P:=S^3_{S, \varphi}$ is a homology 3-sphere whose Casson invariant is equal to $1$. Furthermore, the homology 3-sphere $P$ is by construction $J_4$-equivalent to $S^3$. Let $M$ and $M'$ be two homology 3-spheres, and assume that $\lambda(M) \leq \lambda(M')$. Then, the additivity of the Casson invariant implies that $\lambda(M \# P^{(\lambda(M') - \lambda(M))}) = \lambda(M')$, hence

\[ M \cong M \# ({S^3})^{(\lambda(M') - \lambda(M))} \stackrel{J_{4}}{\sim} M \# P^{(\lambda(M') - \lambda(M))} \stackrel{Y_{4}}{\sim} M'\] where the last equality follows from Theorem \ref{Y4lemma}. By transitivity, we get that $M$ is $J_4$-equivalent to $M'$.
\end{proof}
\begin{rem}
Knowing the values of $\lambda$ on $J_5$ would \textit{a priori} not be enough to discuss the $J_5$-equivalence, as the $Y_5$-equivalence is related to higher-order finite-type invariants.
\end{rem}
\label{sec3}

\begin{appendices}
\titlelabel{\normalsize APPENDIX \thetitle.  }
\titleformat*{\section}{\Large\bfseries\scshape}
\section{SageMath computer program}

In this appendix, we provide the code that we used to verify that the map $\psi$ constructed in Section \ref{themappsi} belongs to $J_3$, compute $\tau_3(\psi)$ and write it as a linear combination of degree 3 tree-like Jacobi diagrams.

We divide the program in two parts. The first part of the program is functional for any genus $g \geq 1$ and allows one to compute the maps $L^\theta_k$ on commutators, for $k \in \{4,\dots,N\}$, provided we are given a symplectic expansion $\theta$ up to degree $N-2$. Here we take $N:= 5$ and the expansion used in the program is a truncation of the one given in \cite{masinf}. The entries of the functions $L^\theta_k$ in the program are strings describing elements in $\pi$ (for example $\alpha_1\beta_2^{-1}$ is encoded as `a1+b2-'). The output of the function $L^\theta_k$ in the code is actually an element of degree $k$ in the free algebra on $2g$ generators, instead of an element of the tensor algebra $\widehat{T}$. The second part of the code is specific to the genus $g:=2$, and allows us to compute $\tau_3(\psi)$ by entering \emph{barcodes} associated to the spines in Figure \ref{twistfig}. A barcode is a list of non-zero integers between $g$ and $-g$, which corresponds to a word in $\pi$ through the correspondences $\pm (2i-1) \sim \alpha_i^{\pm 1}$ and $\pm 2i \sim \beta_i^{\pm 1}$. Such a barcode is then transformed in a string encoding an element of $\pi$ and we use the function $L_k^\theta$ to compute $\tau_3(\psi)$. In the end,  the program compares $\tau_3(\psi)$ to the expansion of trees claimed in Section \ref{subseccomp}.

We explain on an example how to construct a barcode encoding a curve in the surface. We consider the boundary of a neighborhood of the spine of the twist $T_7$ in Figure \ref{twistfig}, orient it in any way, and denote it $\delta_7$. We lift $\delta_7$ to an element $y_7$ of the fundamental group in an arbitrary way, by linking it to the base point with an arc. 
\begin{figure}[h]
	\centering
	\includegraphics[scale=1.2]{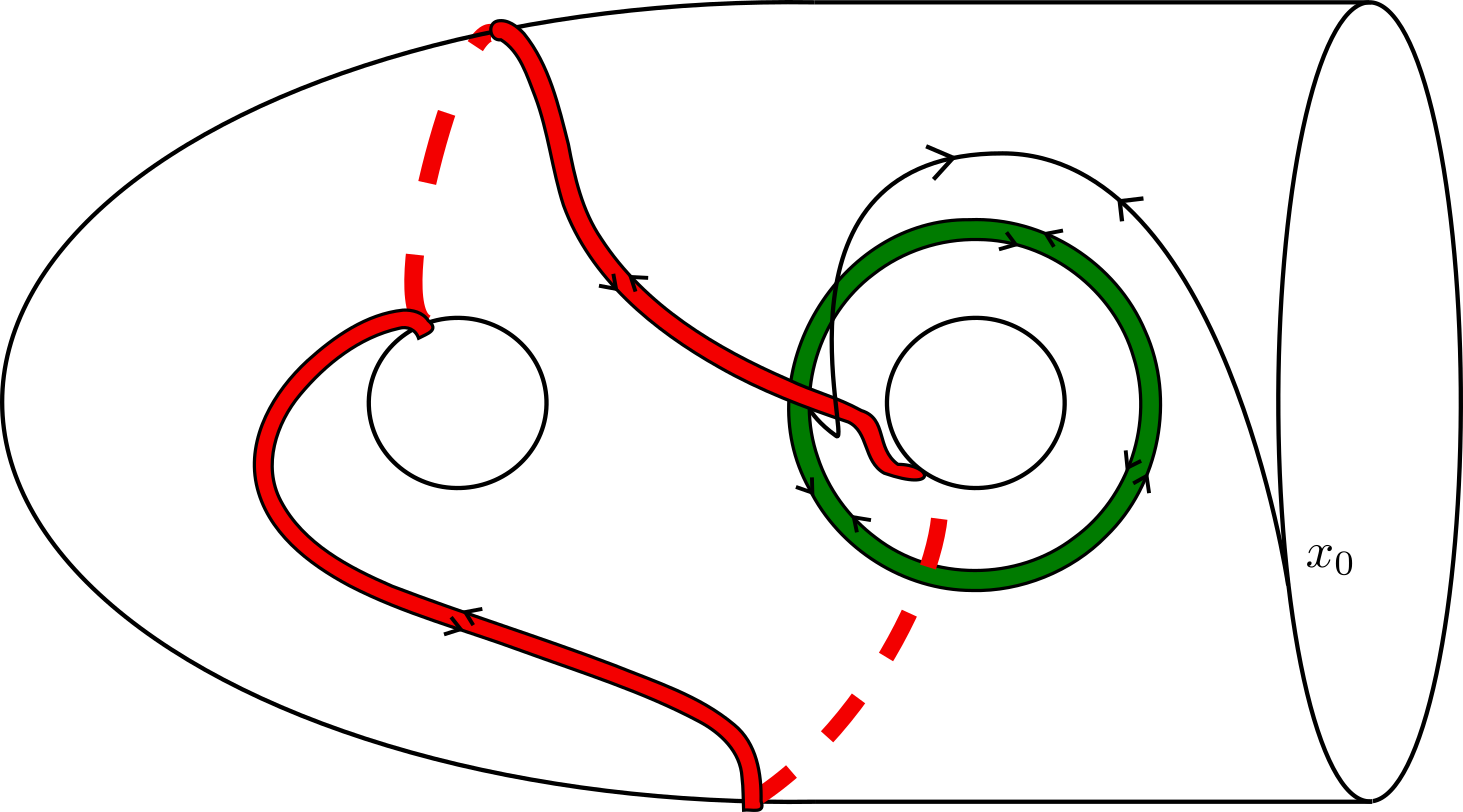}
	\caption{The element $y_7 \in \pi$, based at $x_0$}
	\label{surface2}
\end{figure}
Precisely, the based curve $y_7$ is obtained by first travelling from $x_0$ to the boundary of the spine along this arc (drawn with two opposite arrows in Figure \ref{surface2}), then going along $\delta_7$, and finally travelling back along the arc from the spine to $x_0$. We can now express the curve defined by the spine as a word in the fundamental group, using the basis of $\pi$ chosen in Figure \ref{basispi}. Respecting this procedure, one will obtain a commutator $[U,V] \in \pi$ where $U$ and $V$ are elements corresponding to the curves of the spine, lifted to elements of $\pi$ using the arc $\gamma$. In our case, we get that $y_7 = [\alpha_2^{-1}\beta_2[\alpha_1, \beta_1^{-1}]\beta_1^{-1}\alpha_1^{-1}\beta_2^{-1},\beta_2] \in \pi$, where the brackets refer to the commutator bracket in the free group $\pi$. We now use the correspondency described above and we get that the barcode corresponding to $y_7$ is $[-3, 4, 1, -2, -1, 2, -2, -1, -4, 4, 4, 1, 2, -2, 1, 2, -1, -4, 3, -4]$. Of course, by changing the word describing the element $y_7$, we would get a different barcode (for example $[-3, 4, 1, -2, -1, -1, 4, 1, 1, 2, -1, -4, 3, -4]$ is a simpler barcode for $y_7$), but this will not affect the result.

\vspace{10pt}
\begin{lstlisting}[language = python]
# Choose the genus and the nilpotency class

g=2
N=5


# The free associative algebra on 2g generators a1,...,ag,b1,...,bg

variables = ''
for i in range(g): variables = variables + 'a' + str(i+1) + ','
for i in range(g): variables = variables + 'b' + str(i+1) + ','
variables = variables[:6*g-1] 

A=FreeAlgebra(QQ,2*g,variables)
a=[A(1)]+[A.gen(i) for i in range(g)]
b=[A(1)]+[A.gen(i+g) for i in range(g)]


# "Fast" operations (product and bracket) in A up to degree N

def fpr(u,v):
	res = 0
	u = A(u)
	v = A(v)
	data_u =[(w.to_word(), cf) for (w,cf) in u]	
	data_v =[(w.to_word(), cf) for (w,cf) in v]
	for (wu,cfu)  in data_u:
		for (wv,cfv) in data_v:
			if len(wu) + len(wv) <= N:
				res = res + cfu*cfv*A.monomial(wu)*A.monomial(wv)	
	return res
	
def fbr(u,v):
	return fpr(u,v)-fpr(v,u)	


# Transforms a tensor into a cyclic tensor

def cyc(x):
	res = A(0)
	data = [(w.to_word(), cf) for (w,cf) in x]
	for (w, cf) in data:
		for i in range(len(w)):
			res = res + cf*A.monomial(Word(w[i:] + w[:i]))
	return res
    

# Extracts the degree k part

def extract(expr,k):
	data = [(w.to_word(), cf) for (w,cf) in expr if len(w)==k]
	return A.sum_of_terms(term for term in data)


# Truncates up to degree k

def truncate(expr,k):
	data = [(w.to_word(), cf) for (w, cf) in expr if len(w)<=k]
	return A.sum_of_terms(term for term in data)
    
    
# Computes the degree

def degree(expr):
	data = [len(w.to_word()) for (w,cf) in expr]
	return max(data)
     

# Computes, up to degree N, the exponential of a tensor without constant term

def exp(x):
	p= [A(1) for i in range(N+1)]
	for i in range(1,N+1):
		p[i] = fpr(p[i-1],x)
	res = A(0)
	for i in range(N+1):
		res = res + p[i]*(1/factorial(i))
	return truncate(res,N)
	
    
# Computes, up to degree N, the logarithm of a tensor whose constant term is one

def log(x):
	p = [A(1) for i in range(N+1)]
	d = x-1
	for i in range(1,N+1):
		p[i] = fpr(p[i-1],d)
	res = A(0)
	for i in range(1,N+1):
		res = res+p[i]*((-1)^(i+1))/i 
	return truncate(res,N)
    

# Values of a symplectic expansion "theta" up to order N

logtheta_a = [A(0)] + [a[i]-(1/2)*fbr(a[i],b[i])+(1/12)*fbr(fbr(a[i],b[i]),b[i])-(1/2)*fbr(sum(fbr(a[j],b[j]) for j in range(i)),a[i])
for i in range(1,g+1)]

logtheta_b = [A(0)] +  [b[i]-(1/2)*fbr(a[i],b[i])+(1/4)*fbr(fbr(a[i],b[i]),b[i])+(1/12)*fbr(a[i],fbr(a[i],b[i]))+(1/2)*fbr(b[i],sum(fbr(a[j],b[j]) for j in range(i)))
for i in range(1,g+1)]

theta_a = [exp(logtheta_a[i]) for i in range(g+1)]

theta_b = [exp(logtheta_b[i]) for i in range(g+1)]

theta_a_inv = [exp(-logtheta_a[i]) for i in range(g+1)]

theta_b_inv = [exp(-logtheta_b[i]) for i in range(g+1)]


# Computation of theta from a string such a 'a1+b2-a1-' which encodes an element of the fundamental group

def theta(lis):
	res = A(1)
	for j in range(len(lis)/3):
            index = int(lis[3*j+1])
            if [lis[3*j],lis[3*j+2]]==['a','+']: res = fpr(res,theta_a[index])
            if [lis[3*j],lis[3*j+2]]==['a','-']: res = fpr(res,theta_a_inv[index])
            if [lis[3*j],lis[3*j+2]]==['b','+']: res = fpr(res,theta_b[index])
            if [lis[3*j],lis[3*j+2]]==['b','-']: res = fpr(res,theta_b_inv[index])
	return truncate(res,N)
	
	
# Checks that this expansion is symplectic up to some degree

boundary = ''
for i in range(g): boundary = boundary + 'b' + str(i+1) + '-' + 'a' + str(i+1) + '+'  + 'b' + str(i+1) + '+' + 'a' + str(i+1) + '-'  

exp_omega_tilde = theta(boundary)
exp_omega = exp(sum( fbr(a[i],b[i]) for i in range(1,g+1)))
diff = exp_omega_tilde - exp_omega

print('Computations are done up to degree '+str(N)+'.')

d = 0
for i in range(N+1):
	if extract(diff,i)==0: d=i
print('The expansion is symplectic up to order '+str(d)+'.')


#  The map "L^theta_k" of Kawazumi & Kuno for a commutator

def Ltheta(lis,k):
	logtheta = log(theta(lis))
	ltheta = [extract(logtheta,i) for i in range(0,k-1)]
	res = A(0)
	for i in range(2,k-1):
		res = res + cyc(A(fpr(ltheta[i],ltheta[k-i])))
	return res*(1/2)
\end{lstlisting}
\begin{lstlisting}[language = python]
# Computation of the cyclic tensor corresponding a tree-like Jacobi diagram of the following form :
#
#    b  c  d
#    |  |  |
# a-----------e
#

def br(u,v):
	return u*v-v*u	

def treetotens(a,b,c,d,e):
	return cyc(br(a,b)*br(c,br(d,e)))


# Here we assume that g=2

# We represent elements of the fundamental group with barcodes or strings:
# Transforms a barcode such as [1,-2,3] to a string such as 'a1+b1-a2+'

def list_to_string(x):
    res=''
    for j in range(len(x)):
        if x[j] == 1 : res = res + 'a1+'
        if x[j] == -1 : res = res + 'a1-'
        if x[j] == 2 : res = res + 'b1+'
        if x[j] == -2 : res = res + 'b1-'
        if x[j] == 3 : res = res + 'a2+'
        if x[j] == -3 : res = res + 'a2-'
        if x[j] == 4 : res = res + 'b2+'
        if x[j] == -4 : res = res + 'b2-'
    return res


# Description of psi, by entering the expression (as barcodes) of the curves defining the twists of which psi is composed

def brack(a,b):
	return [a,b,-a,-b]

def bra(a,b):
	mira = list(reversed(a))
	mirb = list(reversed(b))
	return a+b+[-1*i for i in mira] + [-1*i for i in mirb]
        
gamma2 = list_to_string(brack(3,-4)+brack(1,-2))
t1 = list_to_string(bra(brack(-2,1)+[-4,1],[-2]))
t2 = list_to_string(bra([1],[-4,3,4,-2]))
t3 = list_to_string(bra([1],[-4,-3,4]+brack(1,-2)+[-2]))
t4 = list_to_string(bra([3],[-1,-4]))
t5 = list_to_string(bra([1],[-4,-3,-2]))
t6 = list_to_string(bra([3],[-2,-1,-4]))
t7 = list_to_string(bra([-3,4]+brack(1,-2)+[-2,-1,-4],[4]))
t8 = list_to_string(bra([3,4,1],[-2]))
t9 = list_to_string(bra([1],[-4,-2]))
t10 = list_to_string(bra([-4,-3,4]+brack(1,-2)+[-2],[4]))
t11 = list_to_string(bra(brack(-2,1)+[-4,3,4,1],[-2]))
t12 = list_to_string(bra([1,-4,-3,4]+brack(1,-2)+[4,1]+brack(-2,1)+[-4,3,4],[-4,-3,4]+brack(1,-2)+[-2]))
t13 = list_to_string(bra([-4,-3,4]+brack(1,-2)+[-2,-1],[1,2,4]))
s1 = list_to_string(brack(1,-2))
s2 = list_to_string(brack(3,-4))

listelem = [gamma2,t1,t2,t3,t4,t5,t6,t7,t8,t9,t10,t11,t12,t13,s1,s2]
listcoeff = [-3,-1,-1,+2,+2,+1,-1,-1,+1,-1,+1,-1,-1,+1,+7,+2]


# Computation of tau_2(psi) and tau_3(psi) 

tau2_psi = sum(listcoeff[i]*Ltheta(listelem[i],4) for i in range(16))
tau3_psi = sum(listcoeff[i]*Ltheta(listelem[i],5) for i in range(16))


# Comparison with the linear combinations of tree-like Jacobi diagrams

candidate = (-treetotens(a[2],a[1],a[1],b[1],a[1])-treetotens(a[2],b[1],a[1],a[2],a[1])-treetotens(b[2],a[1],a[1],b[1],a[1])-treetotens(b[2],b[1],a[1],b[1],a[1])+treetotens(b[2],a[2],a[1],b[1],a[1])+treetotens(b[2],a[2],a[1],a[2],a[1])+treetotens(b[2],a[2],a[1],b[2],a[1])+treetotens(b[2],a[2],b[1],b[2],a[1])+3*treetotens(b[2],a[2],a[2],b[1],a[1])+treetotens(b[2],a[2],a[2],a[2],a[1])+treetotens(b[2],a[2],b[2],b[1],a[1])-treetotens(b[2],a[1],a[2],b[1],a[1])+treetotens(b[2],b[1],a[2],b[1],a[1])+treetotens(b[2],a[2],b[2],a[2],a[1])-treetotens(b[2],a[2],b[2],a[2],b[1]))

# candidate is also equal to candidate_bis = (treetotens((b[1]+a[2]),a[1],(a[1]+a[2]+b[2]),a[2],(a[1]+b[2]))+treetotens(a[2]-a[1],b[2],a[1]+b[1],a[1],b[1]+b[2])-treetotens(a[2]-a[1],b[1],b[2],a[2],b[1]+b[2])+treetotens(b[2],a[2],(2*a[2]-2*a[1]+b[2]),b[1],a[1]))

print('psi is in J_3 : ' + str(tau2_psi == 0))
print('tau3_psi == candidate : ' + str(tau3_psi == candidate))
\end{lstlisting}

\label{appA}
\end{appendices}
\bibliographystyle{plain}
\bibliography{manuscrit}
\address
\end{document}